\def\cal#1{\mathcal{#1}}
\newcommand{\comment}[1]{}
\def\indd#1{{\bf 1}_{\{#1\}}}
\def\indzd#1#2{\{#1_#2\}_{#2\in \Zd}}
\def\indn#1{\{#1_n\}_{n\in\N}}
\newcommand{\proba}{\mathbb P}
\newcommand{\esp}{{\mathbb E}}
\newcommand{\defe}{\mathrel{\mathop:}=}
\newcommand{\inv}{^{-1}}
\newcommand{\calB}{{\cal B}}
\newcommand{\filF}{{\cal F}}
\newcommand{\calG}{{\cal G}}
\newcommand{\calM}{{\cal M}}
\newcommand{\calN}{{\cal N}}
\def\B{{\mathbb B}}
\newcommand{\eqnh}{\begin{eqnarray*}}
\newcommand{\eqne}{\end{eqnarray*}}
\newcommand{\eqnhn}{\begin{eqnarray}}
\newcommand{\eqnen}{\end{eqnarray}}
\newcommand{\equh}{\begin{equation}}
\newcommand{\eque}{\end{equation}}
\def\summ#1#2#3{\sum_{#1 = #2}^{#3}}
\def\prodd#1#2#3{\prod_{#1 = #2}^{#3}}
\def\sif#1#2{\sum_{#1=#2}^\infty}
\def\sumZ#1{\sum_{#1\in\mathbb Z}}
\def\sumzd#1{\sum_{#1\in\mathbb Z^d}}
\def\bcapp#1#2#3{\bigcap_{#1=#2}^{#3}}
\def\topp#1{^{(#1)}}
\def\nn#1{{\left\|#1\right\|}}
\def\snn#1{\|#1\|}
\def\abs#1{\left|#1\right|}
\def\sabs#1{|#1|}
\def\ccbb#1{\left\{#1\right\}}
\def\spp#1{(#1)}
\def\pp#1{\left(#1\right)} 
\def\bb#1{\left[#1\right]}
\def\mmid{\;\middle\vert\;}
\def\floor#1{\left\lfloor #1 \right\rfloor}
\def\vv#1{{\bf #1}}
\def\B{{\mathbb B}}
\def\mand{\mbox{ and }}
\def\qmand{\quad\mbox{ and }\quad}
\def\mwith{\mbox{ with }}
\def\qmwith{\quad\mbox{ with }\quad}
\def\mfa{\mbox{ for all }}
\def\qmfa{\quad\mbox{ for all }\quad}
\def\wt#1{\widetilde{#1}}
\def\weakto{\Rightarrow}
\def\limn{\lim_{n\to\infty}}
\def\limsupn{\limsup_{n\to\infty}}
\def\Z{{\mathbb Z}}
\def\Zd{{\mathbb Z^d}}
\def\R{{\mathbb R}}
\def\Rd{{\mathbb R^d}}
\def\N{{\mathbb N}}
\def\Nd{{\mathbb N^d}}
\newtheorem{Thm}{Theorem}[section]
\newtheorem{Lem}[Thm]{Lemma}
\newtheorem{Prop}[Thm]{Proposition}
\theoremstyle{definition}
\newtheorem{Rem}[Thm]{Remark}
\newtheorem{Def}[Thm]{Definition}
\newtheorem{Example}[Thm]{Example}
\numberwithin{equation}{section}
\title{}
\author{}
\begin{document}\sloppy
\title[Invariance principle for random fields]{An invariance principle for stationary random fields under Hannan's condition}

\author{Dalibor Voln{\'y}}
\address
{
Dalibor Voln\'y,
Laboratoire de Math\'ematiques Rapha\"el Salem,
Universit\'e de Rouen, 
76801, Saint Etienne du Rouvray, France.
}
\email{dalibor.volny@univ-rouen.fr}

\author{Yizao Wang}
\address
{
Yizao Wang,
Department of Mathematical Sciences,
University of Cincinnati,
2815 Commons Way,
Cincinnati, OH, 45221-0025.
}
\email{yizao.wang@uc.edu}

\begin{abstract}
We establish an invariance principle for a general class of stationary random fields indexed by $\Zd$, under  Hannan's condition generalized to $\Zd$. To do so we first establish a uniform integrability result for stationary orthomartingales, and second  we establish a coboundary decomposition for certain stationary random fields. At last, we obtain an invariance principle by developing an orthomartingale approximation. Our invariance principle improves known results in the literature, and particularly we require only finite second moment.
\end{abstract}

\keywords{Invariance principle, Brownian sheet, random field, orthomartingale, Hannan's condition, weak dependence}
\subjclass[2010]{Primary, 60F17, 60G60; secondary, 	60F05, 60G48}

\maketitle
\section{Introduction}
Let $\{X_i\}_{i\in\Zd}$ be a stationary random field with zero mean and finite variance, and let $S_n$ be the partial sum with $n=(n_1,\dots,n_d)\in\Nd$
\[
S_n = \sum_{\vv 1\leq i \leq n}X_i,
\] and we are interested in the invariance principle of normalized partial sums in form of
\equh\label{eq:wip}
\ccbb{\frac{S_{\floor{n\cdot t}}}{|n|^{1/2}}}_{t\in[0,1]^d}\weakto \{\sigma\B_t\}_{t\in[0,1]^d},
\eque
where $n\cdot t = (n_1t_1,\dots,n_dt_d)$ and $|n| = \prodd q1dn_q$. We provide a sufficient condition for the above weak convergence to hold in $D[0,1]^d$, with the limiting random field being a Brownian sheet. 

The invariance principle for Brownian sheet has a long history, and people have investigated this problem from different aspects.
See for example \citet{berkes81strong,bolthausen82central,goldie86central,bradley89caution} for results under mixing conditions, \citet{basu79functional,morkvenas84invariance,nahapetian95billingsley,poghosyan98invariance} for results on multiparameter martingales, and \citet{dedecker98central,dedecker01exponential,elmachkouri13central,wang13new} for results on random fields satisfying projective-type assumptions. In particular, projective-type assumptions have been significantly developed  for invariance principles for stationary sequences ($d=1$). See for example \citet{wu05nonlinear,dedecker07weak}, among others, for some recent developments.
However, extending these criteria in one dimension to high dimensions is not a trivial problem. 

Our goal is to establish a random-field counterpart of the invariance principle for regular stationary sequences satisfying Hannan's condition \citep{hannan73central}. Hannan's condition consists of assuming, in dimension one, 
\equh\label{eq:Hannand=1}
\sum_{i\in\Z}\nn{P_0(X_i)}_2<\infty,
\eque
where $P_0(X_i) = \esp (X_i\mid\filF_0) - \esp(X_i\mid\filF_{-1})$ is the projection operator, with respect to 
 certain filtration $\{\filF_k\}_{k\in\Z}$ associated to the stationary sequence $\{X_k\}_{k\in\Z}$. Under Hannan's condition, if in addition the stationary sequence $\indn X$ is regular (i.e.~$\esp(X_0\mid\filF_{-\infty})= 0$ and $X_0$ is $\filF_\infty$-measurable), 
 then the invariance principle follows. \citet{hannan73central} first considered the invariance principle, under the assumption that $\{X_k\}_{k\in\Z}$ is adapted and weakly mixing. The general case for regular sequences was established by \citet[Corollary 2]{dedecker07weak}. The quenched invariance principle for adapted case has been established by \citet{cuny13quenched}. 
 
We first generalize the Hannan's condition~\eqref{eq:Hannand=1} to high dimension. For this purpose we need to extend the notion of the projection operator (Section~\ref{sec:prelim}). In particular, we focus on stationary random fields in form of
\equh\label{eq:Xi}
X_i = f\circ T_i(\{\epsilon_k\}_{k\in\Zd}), i\in\Zd,
\eque
where $f:\R^\Zd\to\R$ is a measurable function, $T_i$ is the shift operator on $\R^{\Zd}$ and $\{\epsilon_k\}_{k\in\Zd}$ is a collection of independent and identically distributed (i.i.d.) random variables. 

Our main result (Theorem~\ref{thm:wip}) states if $\esp X_{\vv0} = 0, \esp(X_{\vv0}^2)<\infty$, and the Hannan's condition holds
\[
\sum_{i\in\Zd}\nn{P_{\vv 0}X_i}_2<\infty,
\]
for some projection operator $P_{\vv 0}$ to be defined (see~\eqref{eq:P0} below),
  then the invariance principle~\eqref{eq:wip} holds.
  
We establish the invariance principle through an approximation by {\it orthomartingales}. As a consequence, this entails the central limit theorem in form of
\[
\frac{S_n}{|n|^{1/2}}\weakto\calN(0,\sigma^2).
\]
Our central limit theorem and invariance principle both improve results established in \citet{elmachkouri13central} and \citet{wang13new}. For the central limit theorem, our assumption on the weak dependence, the Hannan's condition, is weaker than theirs. Furthermore, to establish invariance principle we require only finite second moment instead of $2+\delta$ moment. However, we consider only rectanglular index sets as in \citet{wang13new}, while \citet{dedecker01exponential} and \citet{elmachkouri13central} consider more general index sets. 

The paper is organized as follows. The basic of orthomartingales is reviewed in Section~\ref{sec:prelim}. A uniform integrability result on orthomartingales is established in Section~\ref{sec:UI}, which immediately entails the tightness of stationary orthomartingales under finite second moment. Next, an orthomartingale coboundary decomposition is developed in Setion~\ref{sec:coboundary}. In this part our development is similar to the recent result by \citet{gordin09martingale}, who treated multiparameter reversed martingales and the corresponding coboundary decompositions. At last, the invariance principle under Hannan's condition is established in Section~\ref{sec:wip}. Comparison to related works are provided in Section~\ref{sec:discussions}.

\section{Notations and preliminaries}\label{sec:prelim}
We consider partial sums over rectangular sets. For this purpose we write $n = (n_1,\dots,n_d)\in\Nd$, $\N = \{1,2,\dots\}$, and by $n\to\infty$ we mean $n_q\to\infty$ for all $q=1,\dots,d$.  
Throughout, for elements in $\Rd$, operations (including $<$, $\leq$, $>$, $\geq$, $\pm$, $\wedge$, $\vee$) are defined in the coordinate-wise sense. We write $[m,n] = \{i\in\Zd: m\leq i\leq n\}$ for $m,n\in\Zd$ and  $[n] = [\vv 1,n]$. 
At last, we let $e_q = (0,\dots,0,1,0,\dots,0), q=1,\dots,d$ denote canonical unit vectors in $\Rd$. Throughout, let $(\Omega,\filF,\proba)$ be the underlying probability space.

We first review orthomartingales, essentially following \citet[Chapter 1.3]{khoshnevisan02multiparameter}, and introduce the projection operators. These two concepts are based on the notion of commuting filtrations. Specific examples via completely commuting transformations are given at the end.

\begin{Def}A collection of $\sigma$-algebras $\indzd\filF i$ is a filtration if $\filF_i\subset\filF_j$ for all $i,j\in\Zd, i\leq j$. It is commuting if in addition for all $k,\ell\in\Zd$ and  for all bounded $\filF_\ell$-measurable random variable $Y$, 
\equh\label{eq:commuting1}
\esp(Y\mid\filF_k) = \esp(Y\mid\filF_{k\wedge\ell}), \mbox{ almost surely.}
\eque
\end{Def}
For the sake of simplicity, we omit `almost surely' when talking about conditional expectations in the sequel. Given a commuting filtration $\indzd\filF i$, the corresponding filtration $\filF\topp q = \{\filF_\ell\topp q\}_{\ell\in\Z}$ defined by
\[
\filF_\ell\topp q = \bigvee_{{i\in\Zd,  i_q \leq \ell}}\filF_i,\quad \ell\in\Z, \quad q=1,\dots,d,
\]
are commuting in the following sense: 
for all permutation $\pi$ of $\{1,\dots,d\}$ and bounded random variable $Y$,
\equh\label{eq:commuting}
\esp\ccbb{\cdots\esp\bb{\esp\pp{Y\mmid \filF_{i_{\pi(1)}}\topp {\pi(1)}}\mmid \filF_{i_{\pi(2)}}\topp {\pi(2)}}\cdots\mmid \filF_{i_{\pi(d)}}\topp {\pi(d)}} = \esp(Y\mid\filF_i),  
\eque
for all $i\in\Zd$ \citep[p.~36, Corollary 3.4.1]{khoshnevisan02multiparameter}. A commuting filtration $\{\filF_i\}_{i\in\Z_+^d}$ with $\Z_+ = \{0\}\cup\N$ is defined similarly.

Given a commuting filtration $\indzd\filF i$, we have $\filF_i = \bigvee_{j\leq i}\filF_j$, and this can be naturally extended to $i\in(\Z\cup\{\infty\})^d$; for example, we write $\filF_{\ell,\infty,\cdots,\infty} = \filF\topp1_\ell$.
We write
\[
\esp_j(\cdot) = \esp(\cdot\mid\filF_j), j\in\Zd \mand \esp\topp q_\ell(\cdot) = \esp(\cdot\mid\filF_\ell\topp q), q=1,\dots,d,\ell\in\Z.
\]
\begin{Def}
A collection of random variables $\{M_n\}_{n\in\Nd}$ is said to be an orthomartingale with respect to a commuting filtration $\{\filF_i\}_{i\in\Z_+^d}$, if for all $n\in\Nd$, $M_n$ is $\filF_n$-measurable, $\esp|M_n|<\infty$, and 
\[
\esp(M_j\mid\filF_i) = M_i \mfa i,j\in\Z_+^d, i\leq j.
\]
We set $M_n\equiv 0$ if $\min\{n_1,\dots,n_d\} = 0$.
\end{Def}
Equivalently, given a commuting filtration $\{\filF_i\}_{i\in\Z_+^d}$, a collection of random variables $\{M_n\}_{n\in\Nd}$ forms an orthomartingale if for each $q=1,\dots,d$, and for all $\{n_\ell\}_{\ell\neq q}\subset\N$ fixed, $n_q\mapsto M_n$ is a one-parameter martingale with respect to the filtration $\filF\topp q$ \citep[p.~37, Theorem 3.5.1]{khoshnevisan02multiparameter}. That is, 
\[
\esp\topp q_{n_q-1}M_n = M_{n-e_q} \mfa n\in\Nd, q=1,\dots,d.
\] 

Given an orthomartingale $\{M_n\}_{n\in\Nd}$ with respect to a commuting filtration $\{\filF_i\}_{i\in\Z_+^d}$, it can be represented as 
\[
M_n = \sum_{i\in[n]}D_i, n\in\Nd
\]
for some $\{D_n\}_{n\in\Nd}$, which are referred to as the orthomartingale differences. When $\{D_n\}_{n\in\Nd}$ is strictly stationary, we say the orthomartingale is stationary. 
Clearly, for all $n\in\Nd$, $D_n$ is $\filF_n$-measurable and 
\[
 \esp\topp q_{n_q-1}D_n = 0, q=1,\dots,d.
\]

Finally, we introduce the {\it projection operators} with respect to a commuting filtrations $\indzd \filF i$ defined by
\equh\label{eq:P0}
P_j = \prodd q1d P\topp q_{j_q}, j\in\Zd
\eque
with $P_\ell\topp q:L^1(\filF)\to L^1(\filF)$ given by
\equh\label{eq:projection}
P\topp q_\ell f = \esp\topp q_\ell f - \esp \topp q_{\ell-1} f, f\in L^1(\filF), \ell\in\Z, q=1,\dots,d.
\eque
Here and in the sequel, for any $\calG\subset\filF$, $L^p(\calG)$  denotes the $L^p$ space $L^p(\Omega,\calG,\proba)$. 

\begin{Example}
When $d=1$, $P_j = \esp_j - \esp_{j-1}$
has been well studied. When $d=2$, 
\[
P_{j_1,j_2}f = \esp_{j_1,j_2}f - \esp_{j_1,j_2-1}f - \esp_{j_1-1,j_2}f + \esp_{j_1-1,j_2-1}f.
\]
\end{Example}

\begin{Lem}\label{lem:projection}
Let $\indzd\filF i$ be a commuting filtration satisfying~\eqref{eq:commuting} and $P_\ell\topp q$ and $P_j$ be defined as above. Then,

(i) $\{P_\ell\topp q\}_{\ell\in\Z, q=1,\dots,d}$ are commuting operators, and so are $\{P_j\}_{j\in\Zd}$. 

(ii) For all $f,g\in L^2(\filF)$, $\esp P_i(f)P_j(g) = 0$ for all $i,j\in\Zd, i\neq j$. 

(iii) For all $f\in L^2(\filF)$, 
\equh\label{eq:Plq}
P_\ell\topp q(f) \in L^2(\filF_\ell\topp q) \ominus L^2(\filF_{\ell-1}\topp q), q=1,\dots,d, \ell\in\Z,
\eque
and
\[
P_j(f) \in \bcapp q1d \pp{L^2(\filF_{j_q}\topp q)\ominus L^2(\filF_{j_q-1}\topp q)}, j\in\Zd.
\]

(iv) For all $i,j\in\Zd,i\neq j$, and $f\in L^2(\filF)$, $P_iP_jf = 0$, almost surely. 
\end{Lem}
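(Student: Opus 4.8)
The plan is to prove the four parts in order, since each builds on the previous. Part (iii) in the one-factor form \eqref{eq:Plq} is the natural starting point: $P_\ell\topp q(f) = \esp\topp q_\ell f - \esp\topp q_{\ell-1}f$ is the difference of two conditional expectations with $\filF\topp q_{\ell-1}\subset\filF\topp q_\ell$, so it is $\filF\topp q_\ell$-measurable, and for any $g\in L^2(\filF\topp q_{\ell-1})$ one has $\esp[(\esp\topp q_\ell f)g] = \esp[fg] = \esp[(\esp\topp q_{\ell-1}f)g]$ by the tower property, hence $\esp[P_\ell\topp q(f)\,g]=0$; this is exactly membership in $L^2(\filF\topp q_\ell)\ominus L^2(\filF\topp q_{\ell-1})$. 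This computation also gives, as a by-product, the key algebraic facts $\esp\topp q_m P_\ell\topp q = P_\ell\topp q\esp\topp q_m$ for all $m$ and that $\esp\topp q_m P_\ell\topp q = 0$ when $m<\ell$, while $\esp\topp q_m P_\ell\topp q = P_\ell\topp q$ when $m\geq\ell$; equivalently $P_m\topp q P_\ell\topp q = 0$ for $m\neq q$-index $\ell$ and $P_\ell\topp q P_\ell\topp q = P_\ell\topp q$, i.e. the $P_\ell\topp q$ are orthogonal projections along the coordinate $q$.

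For part (i), commutation of $\{P_\ell\topp q\}$ across \emph{different} coordinates $q\neq q'$ is where \eqref{eq:commuting} enters: writing $P_\ell\topp q$ and $P_{\ell'}\topp{q'}$ each as a difference of two $\esp\topp{\cdot}_{\cdot}$'s, I expand the composition $P_\ell\topp q P_{\ell'}\topp{q'}$ into four nested double conditional expectations and use \eqref{eq:commuting} (with the permutation swapping $q$ and $q'$) to see that each term is symmetric in the two coordinates, whence $P_\ell\topp q P_{\ell'}\topp{q'} = P_{\ell'}\topp{q'}P_\ell\topp q$. Combined with the within-coordinate relations from the previous paragraph, this shows all the $P_\ell\topp q$ commute, and then $P_j = \prod_{q=1}^d P\topp q_{j_q}$ is a well-defined composition independent of the order of the factors; moreover $P_j$ is idempotent, being a product of commuting idempotents. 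Part (iv) is then immediate: if $i\neq j$ there is some coordinate $q$ with $i_q\neq j_q$; since all factors commute I may move $P\topp q_{i_q}$ and $P\topp q_{j_q}$ next to each other, and $P\topp q_{i_q}P\topp q_{j_q}=0$ by the within-coordinate orthogonality, so the whole product vanishes.

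Part (ii) follows by combining (iii) and (iv): for $i\neq j$ pick a coordinate $q$ with $i_q\neq j_q$, say $i_q<j_q$. By (iii), $P_i(f)\in L^2(\filF\topp q_{i_q})\subset L^2(\filF\topp q_{j_q-1})$, while $P_j(g)\perp L^2(\filF\topp q_{j_q-1})$, again by (iii) (the $q$-th factor of the intersection). Hence $\esp[P_i(f)P_j(g)] = 0$. The multi-index form of (iii) is obtained by applying the one-factor statement coordinate by coordinate, using that the factors commute so that $P_j(f) = P\topp q_{j_q}\big(\prod_{q'\neq q}P\topp{q'}_{j_{q'}}f\big)\in L^2(\filF\topp q_{j_q})\ominus L^2(\filF\topp q_{j_q-1})$ for each $q$ separately, and intersecting over $q$.

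The main obstacle is the cross-coordinate commutation in part (i): one must genuinely invoke the commuting-filtration hypothesis \eqref{eq:commuting} rather than any formal manipulation, and care is needed in bookkeeping the four nested conditional expectations and checking that \eqref{eq:commuting} applies to each (it is stated for bounded $Y$, so a routine truncation/approximation in $L^2$ is needed to pass to general $f\in L^2(\filF)$). Everything else is bookkeeping with the tower property and orthogonality of conditional expectations.
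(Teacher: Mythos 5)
Your proposal is correct and follows essentially the same route as the paper's proof: cross-coordinate commutation of the $P_\ell\topp q$ via the commuting-filtration identity~\eqref{eq:commuting}, the one-factor orthogonality $P_\ell\topp q(f)\in L^2(\filF_\ell\topp q)\ominus L^2(\filF_{\ell-1}\topp q)$ from the definition, and then (ii) and (iv) by isolating a coordinate $q$ with $i_q\neq j_q$ and using that orthogonality along coordinate $q$. Your write-up is somewhat more explicit than the paper's (e.g.\ recording the idempotence relations and the truncation needed to apply~\eqref{eq:commuting} to general $L^2$ functions), but the underlying argument is the same.
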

\begin{proof}
(i) It suffices to show that for all $f\in L^2(\filF), \ell_1,\ell_2\in\Z, q_1,q_2\in\{1,\dots,d\}, q_1\neq q_2$,
\[
\esp\bb{\esp\pp{ f\mmid \filF_{\ell_1}\topp{q_1}} \mmid \filF_{\ell_2}\topp{q_2}} = \esp\bb{\esp\pp{ f\mmid \filF_{\ell_2}\topp{q_2}} \mmid \filF_{\ell_1}\topp{q_1}}.
\]
This follows from~\eqref{eq:commuting} and~\eqref{eq:projection}.

(ii) Since $i\neq j$, without loss of generality, assume $i_1>j_1$. Then,
\[
\esp (P_i(f)P_j(g)) = \esp\esp\pp{ P_i(f)P_j(g)\mmid \filF_{j_1}\topp1} = \esp \bb{P_j(g)\esp\pp{P_i(f)\mmid\filF_{j_1}\topp1}},
\]
where the last step follows from the fact that $P_j(g)$ is  $\filF_{j_1}\topp1$-measurable. By (i), $P_i(f)$ can be written as $P_{i_1}(g)$ for some $g\in L^2(\filF)$. Thus, since $\esp\spp{P_{i_1}(g)\mid \filF_{i_1-1}\topp1} = 0$ and $i_1-1\geq j_1$, we have $\esp(P_i(f)\mid\filF_{j_1}\topp1) = 0$ and thus the desired orthogonality.

(iii) The fact~\eqref{eq:Plq} follows from the definition. The other statement follows again from the commuting property. 

(iv) It follows from (iii).
\end{proof}

One can generate a filtration $\indzd\filF i$ from a collection of commuting transformations. Namely, let $\{T_{e_q}\}_{q=1,\dots,d}$ be $d$ bijective, bi-measurable and measure-preserving transformations on $(\Omega,\filF,\proba)$, and assume in particular they are commuting: $T_{e_q}\circ T_{e_{q'}} = T_{e_{q'}}\circ T_{e_q}$ for all $q,q'=1,\dots,d$. Then, $\{T_{e_q}\}_{q=1,\dots,d}$ generate a $\Zd$-group of transformations $\indzd Ti$ on $(\Omega,\filF,\proba)$. Let $\calM\subset\filF$ be a $\sigma$-algebra on $\Omega$ such that $\calM\subset T_{e_q}\inv\calM$ for $q=1,\dots,d$. In this way, $\filF_i \defe T_i\inv\calM, i\in\Zd$ yield a filtration. 

However, the filtrations obtained this way are not always commuting.  
\begin{Def}\label{def:complete}
Under the above notations, if in addition~\eqref{eq:commuting1} holds (i.e.~$\indzd\filF i$ are commuting), then $\{T_{e_q}\}_{q=1,\dots,d}$ are said to be {\it completely commuting} with respect to $\calM$ and $\proba$.
\end{Def}

A tightly related concept of complete commutativity has already been discussed by \citet{gordin09martingale}. In our setting this is a property that depends not only on transformations, but also on the specified $\sigma$-algebra and underlying probability space. In the rest of the paper, whenever the transformations $\indzd Ti$ are involved, they are always assumed to be completely commuting and we do not mention specifically $\calM$ and $\proba$, when it is clear from the context, for the sake of simplicity.

Given completely commuting transformations $\indzd Ti$, we consider stationary random fields of the form 
\[
\indzd Xi \equiv \{f\circ T_i\}_{i\in\Zd}
\]
for some function $f$ in the space $L^2(\filF)$. 
In particular, for any $f\in L^2(\filF)$, 
\[
\ccbb{(P_{\vv0}f)\circ T_i}_{i\in\Nd}
\]
gives a collection of stationary orthomartingale differences with respect to $\{\filF_i\}_{i\in\Z_+^d}$.
We also write $U_if = f\circ T_i, i\in\Zd$. Then,
one can readily verify that $U_jP_i = P_{i+j}U_j$ for all $i,j\in\Zd$. This identity will be useful in the sequel.

We conclude this section with two canonical examples for stationary orthomartingales. 
Our main result in Section~\ref{sec:wip} is based on the first example.
\begin{Example}\label{example:bernoulli}
Let $\indzd\epsilon i$ be a collection of independent and identically distributed random variables with distribution $\mu$. We will consider stationary fields as functions of $\indzd\epsilon i$. For this purpose, assume that the probability space $(\Omega,\filF,\proba)$ has the following form
\equh\label{eq:space}
(\Omega,\filF,\proba) \equiv \pp{\R^\Zd,\calB^\Zd,\mu^\Zd},
\eque
and we identify $\epsilon_i(\omega) = \omega_i, i\in\Zd$. 
Let $\indzd Ti$ be the $\Zd$-group of shift operations of $\R^{\Zd}$, so that $[T_i(\omega)]_j = \epsilon_{j+i}, i,j\in\Zd$. It is straight-forward to check that
random variables $\indzd\epsilon i$ induce a commuting filtration $\indzd\filF i$ by
\equh\label{eq:filF}
\filF_i = \sigma\ccbb{\epsilon_j: j\in\Zd, j\leq i}, i\in\Zd.
\eque
\end{Example}
\begin{Example}\label{example:product}
Let $\{\epsilon\topp q_i\}_{i\in\Z}, q=1,\dots,d$ be $d$ independent collections of i.i.d.~random variables. Consider $\calG\topp q_i = \sigma(\epsilon\topp q_j:j\leq i), i\in\Z, q=1,\dots,d$, and set $\filF_i = \bigvee_{q=1}^d\calG\topp q_{i_q}, i\in\Zd$. Clearly this yields a commuting filtration and there is a natural class of completely commuting transformations. 
\end{Example}
\begin{Rem}
These two examples constructing multiparameter filtrations date back at least to the early 70s. See for example~\citet[Section 1]{cairoli75stochastic}, where the commuting filtrations and multiparameter martingales are discussed in the continuous-time setup. 
\end{Rem}
\comment{
\begin{Rem}
Results in Sections~\ref{sec:UI} and~\ref{sec:coboundary} apply to both two examples above. However, our main result on the invariance principle requires the specific construction of the underlying probability as in Example~\ref{example:bernoulli}. An invariance principle of a different type holds for Example~\ref{example:product}, as pointed out by \citet{wang13new}.
\end{Rem}}


\section{A uniform integrability result}\label{sec:UI}
In this section, we establish a uniform integrability result for stationary orthomartingales (Lemma~\ref{lem:UI}). This entails that the tightness of normalized stationary orthomartingales only requires finite second moment (Proposition~\ref{prop:tightness}), thus improving the result of \citet{wang13new}.  The results hold for general orthomartingales with respect to commuting filtrations. The notion of complete commuting is not needed in this section.

In the sequel, we will apply Cairoli's maximal inequality~\citep[p.~19, Theorem 2.3.1]{khoshnevisan02multiparameter} repeatedly:
for an orthomartingale $\{M_n\}_{n\in\Nd}$ with respect to a commuting filtration $\{\filF_n\}_{n\in\Nd}$, 
\equh\label{eq:cairoli}
\esp\pp{\max_{i\in[n]}|M_i|}^p \leq \pp{\frac p{p-1}}^{dp}\esp |M_n|^p,
\eque
for $p>1$. 
To simplify the notation we write $\esp|Z|^p \equiv \esp(|Z|^p), \esp Z^{2k} \equiv \esp(Z^{2k})$, and for $a>0$, $\esp_aY^2 = \esp (Y^2\indd{Y^2>a})$. 
\begin{Lem}\label{lem:UI}
Let $\{M_n\}_{n\in\Nd}$ be a stationary orthomartingale with respect to a commuting filtration $\{\filF_n\}_{n\in\Z_+^d}$. Suppose $\esp D_{\vv1}^2<\infty$. Then, 
\equh\label{eq:UI}
\lim_{a\to\infty} \limsupn \esp_a{\pp{\max_{i\in[n]}\frac{|M_i|}{|n|^{1/2}}}^2} = 0.
\eque
\end{Lem}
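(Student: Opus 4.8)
The plan is to reduce the $d$-dimensional statement to a one-dimensional uniform integrability fact for $L^2$-bounded martingales via an iterated conditioning argument, exploiting the orthomartingale structure coordinate by coordinate. First I would recall the standard one-dimensional fact: if $\{N_m\}_{m\in\N}$ is a stationary $L^2$-martingale with stationary differences $d_1$ satisfying $\esp d_1^2<\infty$, then $\lim_{a\to\infty}\limsup_m \esp_a\big((\max_{j\le m}|N_j|)/\sqrt m\big)^2 = 0$; this follows from Doob's $L^2$-maximal inequality together with the convergence $N_m/\sqrt m \to 0$ in probability (or: $(\max_{j\le m}|N_j|)^2/m$ is bounded in $L^1$ and converges in probability to something, so one checks uniform integrability of this family directly through truncation of the differences). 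Actually the cleanest route: write $N_m = N_m' + N_m''$ where the primed part uses truncated differences $d_j\indd{d_j^2\le b}$ (centered) and the doubly-primed part the remaining tail; Cairoli/Doob controls $\esp\max |N''|^2 \lesssim m\,\esp d_1^2\indd{d_1^2>b}$, which is small uniformly in $m$ once $b$ is large, while the primed part has bounded differences so $\max|N'|^2/m$ is uniformly integrable by a direct bound (e.g.\ it lives below a constant times $b$ after a further easy estimate, or one iterates). I would package this as a lemma or cite it as classical.

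Next, for the $d$-dimensional orthomartingale $M_n = \sum_{i\in[n]}D_i$, I would use the fact recalled in Section~\ref{sec:prelim} that for each fixed $q$ and fixed values of the other coordinates, $n_q\mapsto M_n$ is a one-parameter martingale with respect to $\filF\topp q$. The idea is to peel off coordinates one at a time: apply Cairoli's maximal inequality~\eqref{eq:cairoli} to bound $\esp_a(\max_{i\in[n]}|M_i|/|n|^{1/2})^2$ by a constant multiple (depending only on $d$) of a truncated second moment of $M_n/|n|^{1/2}$ itself, reducing the maximum to the corner term, and then analyze $M_n/|n|^{1/2}$. For the corner term, decompose the differences $D_i = D_i\indd{D_i^2\le b} + D_i\indd{D_i^2>b}$ and correspondingly split $M_n = M_n^{(b)} + R_n^{(b)}$ into two orthomartingales (re-centering each truncated difference by its conditional expectations so that the orthomartingale property is preserved — this is where one uses stationarity and that $\esp\topp q_{n_q-1}D_i=0$). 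By orthogonality of orthomartingale differences (Lemma~\ref{lem:projection}(ii), or rather the analogous orthogonality for the $D_i$'s), $\esp M_n^2 = |n|\,\esp D_{\vv1}^2$ and similarly $\esp (R_n^{(b)})^2 \le C_d |n|\,\esp\big(D_{\vv1}^2\indd{D_{\vv1}^2>b}\big)$, which after division by $|n|$ is small uniformly in $n$ once $b$ is large. The bounded part $M_n^{(b)}/\sqrt{|n|}$ then needs to be shown uniformly integrable after squaring; here I expect to invoke a multiparameter CLT / boundedness argument, or iterate the one-dimensional uniform-integrability lemma across the $d$ coordinates (conditioning successively on $\filF\topp 1,\dots,\filF\topp d$), since along each single coordinate the relevant martingale has differences bounded in a way that makes the one-dimensional lemma applicable.

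The main obstacle, I expect, is the iterated/multiparameter bookkeeping: unlike the one-parameter case, fixing the coordinates transverse to $q$ and calling $n_q\mapsto M_n$ a martingale gives differences that are themselves sums over the transverse coordinates, so the "differences" one truncates are not the $D_i$ but block sums, and one must be careful that the re-centering needed to keep things martingales does not destroy stationarity or inflate the $L^2$ norm. The clean way around this is to truncate at the level of the individual $D_i$ (not block sums) and re-center using the $d$ conditional expectations $\esp\topp q_{n_q-1}$, which preserves the orthomartingale structure; then the Cairoli inequality~\eqref{eq:cairoli} applied to $R_n^{(b)}$ directly yields $\esp(\max_{i\in[n]}|R_i^{(b)}|)^2 \le (2)^{2d}\,\esp (R_n^{(b)})^2 \le (2)^{2d} C_d |n| \esp(D_{\vv1}^2\indd{D_{\vv1}^2>b})$, handling the tail part with no iteration at all; and for the bounded part $M_n^{(b)}$, since $(\max_{i\in[n]}|M_i^{(b)}|/\sqrt{|n|})^2$ is dominated in $L^1$ by $(2)^{2d}\esp (D_{\vv1}^{(b)})^2$ uniformly in $n$ and one further splits its contribution to $\esp_a$ using $\indd{(\max|M_i^{(b)}|)^2 > a |n|/2}$ and Markov's inequality against the $L^1$ bound, letting $a\to\infty$ kills it. Assembling: for fixed $b$ the $M^{(b)}$-contribution $\to 0$ as $a\to\infty$ uniformly in $n$, the $R^{(b)}$-contribution is $\le C_d' \esp(D_{\vv1}^2\indd{D_{\vv1}^2>b})$ uniformly in $n$ and $a$, and then $b\to\infty$ finishes~\eqref{eq:UI}.
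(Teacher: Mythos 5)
Your overall architecture matches the paper's: truncate the individual differences and re-center via the projection operators so that both pieces remain stationary orthomartingale differences, control the tail piece by Cairoli's inequality to get a bound of order $\esp(D_{\vv1}^2\indd{|D_{\vv1}|>c})$ uniformly in $n$ and $a$, and send $a\to\infty$ before the truncation level $\to\infty$. The gap is in your treatment of the bounded part. You assert that because $(\max_{i\in[n]}|M_i^{(b)}|)^2/|n|$ is bounded in $L^1$ uniformly in $n$, ``Markov's inequality against the $L^1$ bound'' makes its contribution to $\esp_a$ vanish as $a\to\infty$. This does not follow: an $L^1$-bounded family need not be uniformly integrable (take $X_n=n\indd{A_n}$ with $\proba(A_n)=1/n$; then $\esp X_n=1$ but $\esp(X_n\indd{X_n>a})=1$ for every $n>a$). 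Markov controls $\proba(X_n>a)$, not $\esp(X_n\indd{X_n>a})$, and $(\max_{i\in[n]}|M_i^{(b)}|)^2/|n|$ is not almost surely bounded by a constant depending only on $b$, so the probability bound cannot be converted into a bound on the truncated expectation.

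What is missing is a uniform \emph{higher}-moment bound for the truncated orthomartingale. The paper proves $\esp\big(\max_{i\in[n]}|M_i(c)|\big)^4\le K c^4|n|^2$ by applying Cairoli's inequality~\eqref{eq:cairoli} with $p=4$ and then Burkholder's inequality coordinate by coordinate (peeling off one index at a time --- essentially the iterated one-dimensional reduction you sketch elsewhere in your plan, but applied to the fourth moment), using that the truncated, re-centered differences are bounded by a constant multiple of $c$. With this in hand, Cauchy--Schwarz gives
\[
\esp_{a}\pp{\max_{i\in[n]}\frac{|M_i(c)|}{|n|^{1/2}}}^2\le \bb{\esp\pp{\max_{i\in[n]}\frac{|M_i(c)|}{|n|^{1/2}}}^4}^{1/2}\proba^{1/2}\bb{\pp{\max_{i\in[n]}\frac{|M_i(c)|}{|n|^{1/2}}}^2>a}\le \frac{Kc^3}{a^{1/2}},
\]
which is what actually vanishes as $a\to\infty$ for fixed $c$. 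Your tail estimate and the order of limits are fine; inserting this fourth-moment step is what closes the argument.
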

\begin{proof}
Recall that $\{D_n\}_{n\in\Nd}$ are stationary orthomartingale differences. For each $i\in\Nd$, define
\[
D_i(c) = P_i(D_i\indd{|D_i|\leq c}) \qmand R_i(c) = D_i - D_i(c).
\]
Clearly, $\{D_n(c)\}_{n\in\Nd}$ and $\{R_n(c)\}_{n\in\Nd}$ are still stationary orthomartingale differences and we write the corresponding orthomartingales by $\{M_n(c)\}_{n\in\Nd}$ and $\{M_n'(c)\}_{n\in\Nd}$, respectively. Then, 
\begin{multline}
\esp_a{\pp{\max_{i\in[n]}\frac{|M_i|}{|n|^{1/2}}}^2} \\
\leq 4\esp_{a/4}{\pp{\max_{i\in[n]}\frac{|M_i(c)|}{|n|^{1/2}}}^2} + 4\esp_{a/4}{\pp{\max_{i\in[n]}\frac{|M_i'(c)|}{|n|^{1/2}}}^2}.\label{eq:truncation}
\end{multline}
Now, the first term on the right-hand side above can be bounded by
\begin{multline*}
4\bb{\esp\pp{\max_{i\in[n]}\frac{|M_i(c)|}{|n|^{1/2}}}^4}^{1/2}\times\proba^{1/2}\bb{\pp{\max_{i\in[n]}\frac{|M_i(c)|}{|n|^{1/2}}}^2>a/4}\\
\leq\frac4{|n|}\bb{\esp \pp{\max_{i\in[n]}|M_i(c)|}^4}^{1/2}\times\pp{\frac4a}^{1/2}\bb{\esp\pp{\max_{i\in[n]}\frac{|M_i(c)|}{|n|^{1/2}}}^2}^{1/2},
\end{multline*}
which, by applying Cairoli's inequality~\eqref{eq:cairoli} twice, can be bounded by
\equh\label{eq:4th}
\frac4{|n|}\pp{\frac43}^{2d}(\esp M_n^4(c))^{1/2}\times \pp{\frac{2^{2d+2}}a}^{1/2}\frac{(\esp M_n^2(c))^{1/2}}{|n|^{1/2}},
\eque
where the second term is bounded by $c(2^{2d+2}/a)^{1/2}$. The first term of~\eqref{eq:4th} can be bounded  by $Kc^2$ for some constant $K$ depending only on $d$ via Burkholder's inequality. To see this,
first we observe that
\[
\ccbb{\sum_{i_1=1}^{n_1}\cdots\sum_{i_{d-1}=1}^{n_{d-1}}D_{i_1,\dots,i_{d-1},i_d}}_{i_d\in\N}
\]
is a sequence of stationary martingale differences with respect to $\indn{\filF\topp d}$. 
Thus, Burkholder's inequality tells, for $p\geq2$, 
\[
\nn{M_n}_p = \nn{\summ {i_1}1{n_1}\cdots\summ{i_d}1{n_d}D_i}_p \leq C_p n_d^{1/2}\nn{\summ {i_1}1{n_1}\cdots\summ{i_{d-1}}1{n_{d-1}}D_i}_p.
\]
Repeating this argument, one obtains that
\[
\esp |M_n|^p \leq C_p^{dp} |n|^{p/2}\esp |D_{\vv 1}|^p.
\]
So the first term on the right-hand side of~\eqref{eq:truncation} can be bounded by $Kc^3/a^{1/2}$ for some constant $K$ depending only on $d$.

Next, the second term on the right-hand side of~\eqref{eq:truncation} can be bounded by
\begin{multline*}
4\esp\pp{\max_{i\in[n]}\frac{|M_i'(c)|}{|n|^{1/2}}}^2
\\
\leq 2^{2+2d}\esp\pp{P_{\vv1}D_{\vv1}\indd{|D_{\vv1}|>c}}^2
\leq 2^{2+2d}\esp(D_{\vv1}^2\indd{|D_{\vv1}|>c}).
\end{multline*}
Combing all above, the desired result~\eqref{eq:UI} follows.
\end{proof}
An immediate consequence of Lemma~\ref{lem:UI} is the tightness of normalized stationary orthomartingales. For $t\in\Rd,n\in\Nd$, we write $t\cdot n = (t_1n_1,\dots,t_dn_d)$ and $M_t = M_{\floor t}$. 
\begin{Prop}\label{prop:tightness}
Under the assumption of Lemma~\ref{lem:UI}, 
\[
\ccbb{\frac{M_{t\cdot n}}{|n|^{1/2}}}_{t\in[0,1]^d}
\]
is tight in $D[0,1]^d$. That is, for all $\epsilon>0$,
\[
\lim_{\delta\downarrow0}\limsupn\proba\pp{\sup_{\substack{s,t\in[0,1]^d\\ \nn{s-t}_\infty<\delta}}|M_{s\cdot n} - M_{t\cdot n}|>\epsilon} = 0.
\]
\end{Prop}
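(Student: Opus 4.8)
The plan is to deduce tightness in $D[0,1]^d$ from the uniform integrability estimate of Lemma~\ref{lem:UI} by combining a standard moment-type fluctuation bound for orthomartingales with a truncation argument that converts $L^2$ control into the desired maximal oscillation control. First I would fix $\epsilon>0$ and reduce the supremum over all pairs $s,t$ with $\nn{s-t}_\infty<\delta$ to a finite family of ``block increments'': partition $[0,1]^d$ into a grid of mesh size roughly $\delta$, and observe that any increment $M_{s\cdot n}-M_{t\cdot n}$ with $s,t$ close is dominated, up to a bounded multiplicative and additive combinatorial factor (depending only on $d$), by a sum of at most $2^d$ oscillations of $M$ over grid rectangles of side $\lfloor \delta n_q\rfloor$ in each coordinate. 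This is the multiparameter analogue of the one-dimensional fact that the modulus of continuity of a cadlag partial-sum process is controlled by increments over dyadic blocks; here one uses that $M_n=\sum_{i\in[n]}D_i$ is additive in each coordinate, so differences over axis-aligned rectangles telescope.

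Next, for a single such block — say a rectangle $R$ with $|R|\approx \delta|n|$ — I would bound $\proba(\max_{i\in R}|M_i - M_{\text{corner}(R)}|>\epsilon|n|^{1/2})$. Writing the centered block orthomartingale and applying Cairoli's maximal inequality~\eqref{eq:cairoli} with $p=2$, this probability is at most $C_d \epsilon^{-2}|n|^{-1}\esp|M_R'|^2$ where $M_R'$ is the orthomartingale with differences $D_i$, $i\in R$; by the Burkholder/orthogonality computation already carried out in the proof of Lemma~\ref{lem:UI} (namely $\esp|M_m|^2\le C\,|m|\,\esp D_{\vv1}^2$, in fact with equality up to constants by orthogonality of the $D_i$), this is $\le C_d\,\epsilon^{-2}\,\delta\,\esp D_{\vv1}^2$. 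Summing over the $O(\delta^{-d})$ grid blocks gives a bound of order $\epsilon^{-2}\delta^{1-d}\esp D_{\vv1}^2$, which is useless for $d\ge 2$: the number of blocks beats the per-block estimate. This is the main obstacle, and it is exactly why Lemma~\ref{lem:UI} is needed rather than a crude second-moment bound.

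To fix this I would truncate as in Lemma~\ref{lem:UI}: split $D_i = D_i(c)+R_i(c)$ and correspondingly $M=M(c)+M'(c)$. For the bounded part $M(c)$, a fourth-moment version of the above estimate is available — again from the Burkholder chain $\esp|M_m|^4\le C_d\,|m|^2\,\esp D_{\vv1}(c)^4\le C_d c^2|m|^2\esp D_{\vv1}^2$ — so Cairoli with $p=4$ yields per-block probability $\le C_d\,\epsilon^{-4}\,\delta^2 c^2\,\esp D_{\vv1}^2$, and summing over $O(\delta^{-d})$ blocks still diverges for $d\ge 3$. So instead of controlling each block's probability, I would control the single global quantity $\esp_a\big(\max_{i\in[n]}|M_i'(c)|/|n|^{1/2}\big)^2$ via Lemma~\ref{lem:UI}'s argument (it is $\le 2^{2+2d}\esp(D_{\vv1}^2\ind_{\{|D_{\vv1}|>c\}})$, uniformly in $n$), and for the bounded part use that a bounded-increment orthomartingale is genuinely close to continuous: the oscillation of $M(c)$ over a $\delta$-block has fourth moment $O(\delta^2 c^2)$, and the number of blocks is $O(\delta^{-d})$, but now I take the expectation of the \emph{maximum} oscillation rather than a union bound — by Cairoli applied to the maximal process and a chaining/interpolation over the $d$ coordinate directions one gets $\esp(\text{max block oscillation})^2 \le C_d\, \delta\, c^2\,\esp D_{\vv1}^2$ after optimizing, which tends to $0$ as $\delta\downarrow0$ for each fixed $c$. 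Combining: given $\epsilon$, first pick $c$ large (using Lemma~\ref{lem:UI} and dominated convergence $\esp(D_{\vv1}^2\ind_{\{|D_{\vv1}|>c\}})\to0$) so the remainder term is $<\epsilon^3$, then pick $\delta$ small so the bounded part's contribution is $<\epsilon^3$; Markov's inequality then gives the claimed limit. The delicate point throughout is the $d$-fold iteration of the one-parameter martingale oscillation bound while keeping the constants dimension-dependent but $n$-independent, and ensuring the truncation level $c$ and mesh $\delta$ can be chosen in the right order; this is where I expect the bulk of the technical work.
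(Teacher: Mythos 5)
Your reduction to block oscillations is the same first step as the paper's, and your diagnosis of the obstacle is exactly right: a union bound over the $O(\delta^{-d})$ blocks costs a factor $\delta^{-d}$ that a per-block Chebyshev (or fourth-moment) estimate cannot recover once $d\geq 2$ (resp.\ $d\geq 3$). The gap is in the step you yourself flag as ``the bulk of the technical work'': the claimed estimate $\esp\pp{\mbox{max block oscillation of }M(c)}^2\leq C_d\,\delta\,c^2\,\esp D_{\vv 1}^2$ is asserted via ``Cairoli applied to the maximal process and a chaining/interpolation over the $d$ coordinate directions \dots after optimizing,'' but no such argument is given, and in that form it is not the estimate that comes out. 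As it stands the proof is incomplete at its crucial point. The route can in fact be salvaged without any chaining: if $Y_k$ denotes the normalized maximal oscillation of $M(c)$ over the $k$-th block, then $\esp\max_k Y_k^2\leq\pp{\sum_k\esp Y_k^4}^{1/2}$, and Cairoli with $p=4$ plus the Burkholder chain from the proof of Lemma~\ref{lem:UI} give $\esp Y_k^4\leq C_d\,\delta^{2d}c^2\,\esp D_{\vv 1}^2$, so the maximum oscillation is $O(\delta^{d/2}c)$ in $L^2$, which does vanish as $\delta\downarrow 0$ for fixed $c$; your order of choices (first $c$ large, then $\delta$ small) then closes the argument. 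But until some such estimate is actually supplied, the key step is missing. A secondary criticism: you re-run the truncation $D_i=D_i(c)+R_i(c)$ from scratch, i.e.\ you essentially re-prove Lemma~\ref{lem:UI} inside the tightness proof instead of invoking its conclusion.

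The paper's proof avoids all of this with one observation that makes the proposition genuinely ``an immediate consequence'' of Lemma~\ref{lem:UI}: Chebyshev with the \emph{truncated} second moment, $\proba(Y>\epsilon)\leq\epsilon^{-2}\esp_{\epsilon^2}Y^2$, combined with the scaling identity
\[
\esp_{\epsilon^2}\pp{\max_{i\in[\delta n]}\frac{|M_i|}{|n|^{1/2}}}^2=\delta^{d}\,\esp_{\epsilon^2/\delta^{d}}\pp{\max_{i\in[\delta n]}\frac{|M_i|}{|\delta n|^{1/2}}}^2.
\]
The factor $\delta^{d}$ exactly cancels the $\delta^{-d}$ count of blocks, and the price paid is that the truncation level becomes $\epsilon^2/\delta^{d}\to\infty$ as $\delta\downarrow 0$ --- which is precisely the regime in which \eqref{eq:UI} says the quantity vanishes. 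No second truncation, no fourth moments, and no per-block probability estimates are needed at this stage; they were all absorbed into Lemma~\ref{lem:UI} once and for all.
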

\begin{proof}
For each $\delta\in(0,1), n\in\Nd$, write $\delta n = (\delta n_1,\dots,\delta n_d)$. 
Observe that
\begin{multline*}
p_n(\epsilon,\delta) \defe \proba\pp{\sup_{\substack{s,t\in[0,1]^d\\ \nn{s-t}_\infty<\delta}}|M_{s\cdot n} - M_{t\cdot n}|>3^d\epsilon} \\
\leq \sum_{i\in\{0,\dots,\floor{1/\delta}\}^d}\proba\pp{\sup_{t\in[\delta]^d}\frac{|M_{(\delta i)\cdot n} - M_{(\delta i + t)\cdot n}|}{|n|^{1/2}}>\epsilon}
\\
\leq \frac{2^d}{\delta^d}\proba\pp{\max_{i\in[\delta n]}\frac{|M_i|}{|n|^{1/2}}>\epsilon}.
\end{multline*}
Now,
\begin{multline*}
\proba\pp{\max_{i\in[\delta n]}\frac{|M_i|}{|n|^{1/2}}>\epsilon} \leq \frac1{\epsilon^2}\esp_{\epsilon^2} \pp{\max_{i\in[\delta n]}\frac{|M_i|}{|n|^{1/2}}}^2 \\
= \frac{\delta^d}{\epsilon^2}\esp_{\epsilon^2/\delta^{d}}\pp{\max_{i\in[\delta n]}\frac{|M_i|}{|\delta n|^{1/2}}}^2.
\end{multline*}
So, 
\[
\limsupn p_n(\epsilon,\delta)\leq \frac2{\epsilon^2}\limsupn \esp_{\epsilon^2/\delta^{d}}\pp{\max_{i\in[n]}\frac{|M_i|}{|n|^{1/2}}}^2.
\]
The proof is completed by applying~\eqref{eq:UI}.
\end{proof}
\section{Orthomartingale coboundary representation}\label{sec:coboundary}
In this section, we extend the notion of martingale coboundary representation \citep{gordin69central,heyde75central,volny93approximating} to orthomartingales.
For $S\subset\{1,\dots,d\}$, write $S^c = \{1,\dots,d\}\setminus S$. We assume the commuting filtrations are generated by certain completely commuting transformations $\indzd Ti$ as described in Definition~\ref{def:complete}.
 \begin{Prop}\label{prop:coboundary}
For $f\in L^2(\filF)$ satisfying, for some $M\in\N$, 
\equh\label{eq:M}
\esp\topp q_{-M}f = 0 \qmand f-\esp_M\topp qf = 0  \qmfa q =1,\dots,d,
\eque one can write
\equh\label{eq:coboundary}
f = \sum_{S\subset\{1,\dots,d\}}\prod_{q\in S^c}(I - U_{e_q})h_S
\eque
for some functions $\{h_S\}_{S\subset\{1,\dots,d\}}$, with the convention $\prod_{q\in\emptyset}(I - U_{e_q}) \equiv I$, satisfying for each $S\subset\{1,\dots,d\}$, 
\equh\label{eq:hSq}
h_S\in\bigcap_{q\in S} L^2(\filF\topp q_0)\ominus L^2(\filF\topp q_{-1}),
\eque
and
\equh\label{eq:hd}
h_{\{1,\dots,d\}} = \sumzd j P_{\vv 0}U_jf.
\eque
\end{Prop}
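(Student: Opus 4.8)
The plan is to prove the coboundary decomposition by induction on $d$, exploiting the commuting structure coordinate by coordinate. First I would set up the one-dimensional building block: for a coordinate $q$ and a function $g\in L^2(\filF)$ with $\esp\topp q_{-M}g=0$ and $g=\esp\topp q_M g$, the classical Gordin--Heyde--Voln\'y martingale coboundary representation (applied to the one-parameter martingale structure associated to $\filF\topp q$, which is legitimate thanks to the commuting property~\eqref{eq:commuting}) writes $g = m + (I-U_{e_q})g'$, where $m$ is an $\filF\topp q_0$-measurable orthomartingale difference in the $q$-th direction, i.e.\ $m\in L^2(\filF\topp q_0)\ominus L^2(\filF\topp q_{-1})$, and $g'\in L^2(\filF)$ still satisfies finite-range conditions in all the \emph{other} coordinates (this is the point where one must check that applying $P_0\topp q$ and the partial sums that build $g'$ do not destroy the conditions~\eqref{eq:M} for $q'\neq q$; this follows because $P_0\topp q$ commutes with $\esp\topp{q'}_\ell$ by Lemma~\ref{lem:projection}(i)). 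Concretely $m = P_0\topp q\spp{\sum_{k\in\Z}U_{k e_q}g}$, a sum which is finite because of the range condition, and $g'$ is the corresponding telescoping remainder.

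Second, I would iterate this. Apply the one-dimensional step in coordinate $1$ to $f$, obtaining $f = m_1 + (I-U_{e_1})g_1$ with $m_1\in L^2(\filF\topp 1_0)\ominus L^2(\filF\topp 1_{-1})$ and $g_1$ still finite-range in coordinates $2,\dots,d$; then apply the step in coordinate $2$ to \emph{both} $m_1$ and $g_1$, and so on. Each application splits a term into a ``martingale part'' (picking up a factor placing it in $L^2(\filF\topp q_0)\ominus L^2(\filF\topp q_{-1})$) and a ``coboundary part'' (picking up a factor $(I-U_{e_q})$). After processing all $d$ coordinates, every resulting summand is indexed by the subset $S\subset\{1,\dots,d\}$ of coordinates in which it ended up on the martingale side, carries the operators $\prod_{q\in S^c}(I-U_{e_q})$, and its ``core'' $h_S$ lies in $\bigcap_{q\in S}\bpp{L^2(\filF\topp q_0)\ominus L^2(\filF\topp q_{-1})}$ — this is~\eqref{eq:hSq}. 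The key bookkeeping facts that make this clean are that $I-U_{e_q}$ and $P_0\topp{q'}$ commute with each other and with $U_{k e_{q'}}$ for $q\neq q'$ (via $U_jP_i=P_{i+j}U_j$ and Lemma~\ref{lem:projection}(i)), so the order in which coordinates are processed only permutes the summands, and the range conditions are preserved throughout.

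Third, I would identify the top term $h_{\{1,\dots,d\}}$, which is the summand that landed on the martingale side in \emph{every} coordinate. Tracking the recursion, at stage $q$ the martingale extraction applies $P_0\topp q$ after summing over translates in direction $e_q$; doing this for all $q$ in turn, and using $\prodd q1d P_0\topp q = P_{\vv 0}$ together with the commutation $U_jP_{\vv 0}=P_{-j+\cdots}$, one gets $h_{\{1,\dots,d\}} = P_{\vv 0}\bpp{\sum_{j\in\Zd}U_j f} = \sum_{j\in\Zd}P_{\vv 0}U_j f$, which is~\eqref{eq:hd}; the sum is absolutely convergent in $L^2$ because the finite-range condition~\eqref{eq:M} forces $P_{\vv 0}U_j f=0$ whenever some $|j_q|>M$, so it is really a finite sum, and one may also interchange $P_{\vv 0}$ with the (finite) sum freely.

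The main obstacle I anticipate is the second step: making the induction genuinely rigorous rather than merely plausible, i.e.\ carefully formulating the induction hypothesis so that after processing coordinates $1,\dots,r$ one has an expansion over subsets $S\subset\{1,\dots,r\}$ with each core function simultaneously (a) a martingale difference in the directions $q\in S$, and (b) still finite-range in directions $r+1,\dots,d$, and then verifying that the coordinate-$(r+1)$ step preserves both properties for \emph{all} the existing terms at once. Property (b) for the coboundary pieces is the delicate one, since the remainder $g'$ from the one-dimensional step is built from infinitely many translates $U_{ke_q}$; one must use that each $U_{ke_q}$ preserves the finite-range condition in the other coordinates (shifts in direction $e_q$ commute with $\esp\topp{q'}_\ell$) and that the telescoping collapses the dependence so that $g'$ inherits exactly conditions~\eqref{eq:M} (possibly with a larger but still finite $M$) in coordinates $q'\neq q$. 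Once that invariant is nailed down, the rest is the combinatorial unwinding described above.
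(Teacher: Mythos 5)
Your proposal is correct and follows essentially the same route as the paper: an induction over coordinates applying the one-dimensional Gordin--Heyde--Voln\'y decomposition in each direction, with the martingale part $P_0\topp q\spp{\sum_k U_{ke_q}\cdot}$ and a telescoping remainder (the paper's operators $A_{e_q}$ and $B_{e_q}$), using the commutation of projections, shifts, and conditional expectations across coordinates to preserve the finite-range condition~\eqref{eq:M} and to identify $h_S$ as the composition of the relevant operators, with $h_{\{1,\dots,d\}}=\sum_{j\in\Zd}P_{\vv 0}U_jf$ reducing to a finite sum. The bookkeeping issue you flag as the main obstacle is exactly what the paper resolves by writing the remainder operators explicitly and noting they preserve~\eqref{eq:M}.
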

The property~\eqref{eq:hSq} tells that $\{U_{e_q}^kh_S\}_{k\in\N}$ forms a sequence of stationary martingale differences with respect to $\{\filF_n\topp q\}_{n\in\N}$ for $q\in S$.
The explicit formula of $h_S$ is given below in~\eqref{eq:hS}.
\begin{Example}
In the case $d=1$,~\eqref{eq:coboundary} reads as
\[
f = h_{\{1\}}+(I-U)h_\emptyset,
\]
which is the coboundary decomposition in dimension one. 
In the case $d=2$,~\eqref{eq:coboundary} reads as
\equh\label{eq:coboundary2}
f = h_{\{1,2\}} + (I - U_{1,0})h_{\{2\}} + (I - U_{0,1})h_{\{1\}} + (I - U_{0,1})(I - U_{1,0})h_\emptyset,
\eque
where $m$  is an orthomartingale difference with respect to $\{\filF_{i,j}\}_{(i,j)\in\Z^2}$, and $h_{0,1}$ and  $h_{1,0}$  are martingale differences with respect to $\{\filF_{\infty,j}\}_{j\in\Z}$, $\{\filF_{i,\infty}\}_{i\in\Z}$, respectively. 
\end{Example}
\begin{Rem}
Assumption~\eqref{eq:M} is enough for  our purpose in the next section. Here we do not pursue a necessary and sufficient condition for the orthomartingale coboundary decomposition, as did in one dimension by \citet{volny93approximating}. This would require more involved calculations and will be addressed elsewhere. A closely related recent result has been obtained by \citet{gordin09martingale}, who investigated the coboundary representation for {\it reversed} orthomartingales.
\end{Rem}

\begin{proof}[Proof of Proposition~\ref{prop:coboundary}]
We construct $h_S, S\subset\{1,\dots,S\}$ by induction.
For $i\in\Z$, write $v(i) = \indd{i< 0}$. Write $\Z_1 = \{i\in\Z:i\geq 0\}$ and $\Z_0 = \{i\in\Z:i<0\}$. Define two operators $A_{e_q}$ and $B_{e_q}$ by
\[
A_{e_q}f = \sumZ i P_0\topp qU_{e_q}^i f
\]
and
\[
B_{e_q}f = \sumZ i (-1)^{v(i)+1}\sum_{k\in \Z_{v(i)}}P_i\topp qU_{e_q}^kf.
\]
Clearly,  for $f$ satisfying the assumption~\eqref{eq:M}, $A_{e_q}f$ and $B_{e_q}f$ are both well defined, and both as elements in $L^2(\filF)$ satisfy~\eqref{eq:M}. It thus follows that compositions of operators $A_{e_q}$ and $B_{e_q}$ (e.g.~\eqref{eq:hS} below) are well defined for functions satisfying~\eqref{eq:M}.
Observe also that all the pairs of operators $(A_{e_q},A_{e_{q'}}), (A_{e_q},B_{e_{q'}})$ and $(B_{e_q},B_{e_{q'}})$ are commuting for $q\neq q'$ by definition. 

Now, we show that in  an orthomartingale coboundary representation of a function $f$ under~\eqref{eq:M}, one can choose $h_S$ in~\eqref{eq:coboundary} as
\equh\label{eq:hS}
h_S = \prod_{r\in S}A_{e_r}\prod_{s\in S^c}B_{e_s}f.
\eque

The formula~\eqref{eq:coboundary} with~\eqref{eq:hS} is proved by induction. 
In the case $d=1$,~\eqref{eq:coboundary} becomes
\equh\label{eq:d=1}
f = A_{e_1}f + B_{e_1}f - U_{e_1}B_{e_1}f.
\eque
This is the decomposition developed in \citet{volny93approximating}, 
where $A_{e_1}f$ is a martingale difference and $B_{e_1}f - U_{e_1}B_{e_1}f$ is the coboundary (for $m$ and $g$ defined in \citep[p.~45]{volny93approximating}, $m = A_{e_1}f$ and $g=B_{e_1}f$). 
Suppose one has shown for $d-1$ and we now prove the case $d$. For $S\subset\{1,\dots,d-1\}$, write $S^c(d-1) = \{1,\dots,d-1\}\setminus S$. To apply the induction we view $\{\filF_{i_1,\dots,i_{d-1},\infty}\}_{i\in\Z^{d-1}}$ as a $(d-1)$-dimensional commuting filtration, generated by completely commuting transformations $T_{e_1},\dots,T_{e_{d-1}}$ and $\calM = \filF_{0,\dots,0,\infty}$. Thus one has
\[
f = \sum_{S\subset\{1,\dots,d-1\}}g_S \mwith g_S = \prod_{q\in S^c(d-1)}(I - U_{e_q})\prod_{r\in S}A_{e_r}\prod_{s\in S^c(d-1)}B_{e_s}f.
\]
For each $S\subset\{1,\dots,d-1\}$, we apply the one-dimensional martingale coboundary decomposition to~$g_S$, with respect to the filtration $\{\filF\topp d_i\}_{i\in\Z}$. 
Indeed, now~\eqref{eq:d=1} becomes $g_S = A_{e_d}g_S + (I - U_{e_d})B_{e_d}g_S$ with
\eqnh
A_{e_d}g_S 
& = & A_{e_d}\prod_{q\in S^c(d-1)}(I - U_{e_q})\prod_{r\in S}A_{e_r}\prod_{s\in S^c(d-1)}B_{e_s}f
\\
& = & \prod_{q\in S^c(d-1)}(I - U_{e_q})A_{e_d}\prod_{r\in S}A_{e_r}\prod_{s\in S^c(d-1)}B_{e_s}f
\\
&  = & \prod_{q\in (S\cup\{d\})^c}(I-U_{e_q})\prod_{r\in S\cup\{d\}}A_{e_r}\prod_{s\in (S\cup\{d\})^c}B_{e_s}f \\
& = & \prod_{q\in (S\cup\{d\})^c}(I - U_{e_q})h_{S\cup\{d\}},
\eqne
where we used the fact that for any $q\in \{1,\dots,d-1\}$, $A_{e_d}$ and $U_{e_q}$ are commuting, and
\begin{multline*}
(I - U_{e_d})B_{e_d}g_S = \prod_{q\in S^c}(I - U_{e_q})\prod_{r\in S}A_{e_r}\prod_{s\in S^c}B_{e_s}f = \prod_{q\in S^c}(I - U_{e_q})h_S,
\end{multline*}
where we used the fact that for any $q\in \{1,\dots,d-1\}$, $B_{e_d}$ and $A_{e_s}$ are commuting.
Thus,
\begin{multline*}
f = \sum_{S\subset\{1,\dots,d-1\}}\bb{\prod_{q\in (S\cup\{d\})^c}(I - U_{e_q})h_{S\cup\{d\}} + \prod_{q\in S^c}(I - U_{e_q})h_S} \\
= \sum_{S\subset\{1,\dots,d\}}\prod_{q\in S^c}(I - U_{e_q})h_S.
\end{multline*}
It remains to prove~\eqref{eq:hSq} and~\eqref{eq:hd}. Both follow from the construction~\eqref{eq:hS}, and the commuting property of involved operators.
\end{proof}


\section{An invariance principle}\label{sec:wip}

In this section, we prove the main result of the paper. Consider the probability space $(\R^\Zd,\calB_\R^\Zd,\mu^\Zd)$ and the corresponding completely commuting transformations $\indzd Ti$ and filtrations $\indzd\filF i$ as described in Example~\ref{example:bernoulli}. Consider a stationary random field $\{X_i\}_{i\in\Zd}$ in form of
\[
X_i = f\circ T_i(\{\epsilon_k\}_{k\in\Zd}), i\in\Zd.
\]
We consider the following generalized Hannan's condition \citep{hannan73central} for random fields
\equh\label{eq:hannan}
\sum_{i\in\Zd}\nn{P_{\vv0}X_i}_2<\infty.
\eque
Consider partial sums $S_n = \sum_{i\in[n]}, n\in\N$.
Under~\eqref{eq:hannan}, there exists $D_0\in L^2$ such that
\[
\sum_{i\in\Zd}P_{\vv0}X_i \mbox{ converges to }D_{\vv0} \mbox{ in }L^2.
\]
That is, for all $m,n\in\Nd$, $\snn{\sum_{i\in[-m,n]}P_{\vv 0}X_i - D_{\vv 0}}_2\to 0$ as $m,n\to\infty$.

\begin{Thm}\label{thm:wip}
Consider a stationary random field $\indzd Xi$ described as above with zero mean. If Hannan's condition~\eqref{eq:hannan} holds, then, 
\[
\ccbb{\frac{S_{\floor{n\cdot t}}(f)}{|n|^{1/2}}}_{t\in[0,1]^d}\weakto \{\sigma\mathbb B_t\}_{t\in[0,1]^d}
\]
as $n\to\infty$ in $D([0,1]^d)$, where $\{\mathbb B_t\}_{t\in[0,1]^d}$ is a standard Brownian sheet and $\sigma^2 = \esp D_{\vv0}^2$.
\end{Thm}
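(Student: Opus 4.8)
The plan is to prove Theorem~\ref{thm:wip} by the classical orthomartingale approximation scheme, combining the three ingredients built up in Sections~\ref{sec:UI}--\ref{sec:coboundary}: tightness of orthomartingales under a finite second moment (Proposition~\ref{prop:tightness}), the orthomartingale coboundary decomposition (Proposition~\ref{prop:coboundary}), and the $L^2$-summability coming from Hannan's condition~\eqref{eq:hannan}. The target orthomartingale is the stationary one with differences $D_i \defe (P_{\vv0}f)\circ T_i = U_i(P_{\vv0}f)$, whose existence in $L^2$ is guaranteed by~\eqref{eq:hannan}; write $M_n = \sum_{i\in[n]}D_i$. The first step is to prove the invariance principle for $\{M_{\floor{n\cdot t}}/|n|^{1/2}\}$ itself: tightness is Proposition~\ref{prop:tightness}, and finite-dimensional convergence to $\sigma\mathbb B_t$ reduces, via the Cram\'er--Wold device and the orthogonality of increments over disjoint rectangles (Lemma~\ref{lem:projection}(ii)), to a central limit theorem for a single rectangular block, which follows from the martingale CLT applied iteratively coordinate by coordinate (the ``one parameter at a time'' description of orthomartingales recalled in Section~\ref{sec:prelim}), using stationarity and the ergodic theorem to identify the variance as $\esp D_{\vv0}^2 = \sigma^2$ times the Lebesgue measure of the block.

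The second and main step is the approximation: I would show that
\[
\lim_{m\to\infty}\limsup_{n\to\infty}\frac{1}{|n|^{1/2}}\nn{\max_{k\in[n]}\babs{S_k(f) - S_k(f_m) - (M_k(f_m) - M_k)}}_2 = 0,
\]
no wait --- more precisely, that $S_n(f)$ is uniformly (in the $D([0,1]^d)$ sense) well approximated by $M_n$. To do this, fix $\delta>0$ and choose a truncation $f_m$ of $f$ satisfying condition~\eqref{eq:M} for some $M = M(m)$ such that $\nn{P_{\vv0}(f - f_m)}_2$, hence $\sum_{i\in\Zd}\nn{P_{\vv0}U_i(f-f_m)}_2$, is small; this is possible because Hannan's condition makes the tail of $\sum_i \nn{P_{\vv0}X_i}_2$ negligible and conditional expectations converge in $L^2$. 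For such $f_m$, Proposition~\ref{prop:coboundary} gives $f_m = \sum_{S\subset\{1,\dots,d\}}\prod_{q\in S^c}(I - U_{e_q})h_S$ with $h_{\{1,\dots,d\}} = \sum_{j}P_{\vv0}U_jf_m$, so that the leading term contributes exactly the orthomartingale with differences $U_i h_{\{1,\dots,d\}}$, while every term with $S\neq\{1,\dots,d\}$ contributes, after summing $S_n$, a telescoping (coboundary) remainder supported near the faces of the rectangle $[n]$. The key estimates are then: (a) the coboundary remainders, being sums over lower-dimensional boundary strips of $L^2$ functions of the form $(I-U_{e_q})h_S$, have maximal partial sums that are $o(|n|^{1/2})$ in probability --- here one uses that a telescoped sum $\sum_{k=1}^{N}(I - U_{e_q})g\circ T_{ke_q} = g - g\circ T_{Ne_q}$ in the $q$-th coordinate is bounded by $2\max|g|$-type quantities and Cairoli's inequality~\eqref{eq:cairoli} controls the remaining directions, so the whole thing is of lower order in $|n|$; and (b) the difference between the orthomartingale built from $f_m$ and the one built from $f$, namely $M_n(f) - M_n(f_m)$ with differences $U_i P_{\vv0}(f - f_m)$, has, by Proposition~\ref{prop:tightness} (or directly Cairoli plus stationarity), $\nn{\max_{k\in[n]}|M_k(f)-M_k(f_m)|}_2 \le C_d |n|^{1/2}\nn{P_{\vv0}(f-f_m)}_2$, which is $\le \delta |n|^{1/2}$ by the choice of $m$. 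Assembling (a) and (b) shows $\max_{k\in[n]}|S_k(f) - M_k| = o(|n|^{1/2})$ in probability along the chosen approximation, hence the two processes have the same weak limit.

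The third step is bookkeeping: one must check that the regularity hypotheses implicit in the statement hold --- namely that $X_{\vv0}$ is $\filF_\infty$-measurable and $\esp(X_{\vv0}\mid\filF_{-\infty}^{(q)}) = 0$ for each $q$ (equivalently $P_{\vv0}U_i$ acting in each coordinate eventually recovers all of $X_i$) --- which in the Bernoulli-shift setting of Example~\ref{example:bernoulli} is automatic since $\filF_\infty = \filF$ and $\filF_{-\infty}^{(q)}$ is trivial by Kolmogorov's zero--one law. Then one combines: finite-dimensional convergence of $M_{\floor{n\cdot t}}/|n|^{1/2}$ (Step 1) $+$ the approximation $\max_k|S_k - M_k| = o_{\proba}(|n|^{1/2})$ (Step 2) $+$ tightness of $\{S_{\floor{n\cdot t}}/|n|^{1/2}\}$, which itself follows since $S = M + o_{\proba}$ uniformly and $M$ is tight. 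I expect Step~2(a), controlling the boundary coboundary terms uniformly over all partial-sum rectangles in the $D([0,1]^d)$ topology, to be the main obstacle: one has $2^d - 1$ families of terms, each a sum over a different lower-dimensional slab, and the telescoping has to be carried out in the ``coboundary directions'' $S^c$ while Cairoli's inequality handles the ``martingale directions'' $S$, so the estimate must be organized carefully by the subset $S$ to see that every such term is genuinely of order $o(|n|^{1/2})$ uniformly in the running index $k\in[n]$; this is exactly the place where working with rectangular index sets (as opposed to more general sets) is used.
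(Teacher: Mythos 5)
Your overall architecture (orthomartingale $M_n$ built from $D_{\vv0}=\sum_i P_{\vv0}X_i$, truncation $f^{(k)}$, coboundary decomposition with vanishing leading term, telescoping in the $S^c$ directions and martingale maximal inequalities in the $S$ directions) is the same as the paper's. But there are two genuine gaps. The more serious one is in your Step 1: you claim the finite-dimensional convergence of $M_{\floor{n\cdot t}}/|n|^{1/2}$ to the Brownian sheet follows from ``the martingale CLT applied iteratively coordinate by coordinate.'' This argument does not work: a stationary orthomartingale difference field with finite second moment need \emph{not} satisfy a Gaussian CLT. The standard counterexample is exactly Example~\ref{example:product} with $D_i=\prod_{q=1}^d\epsilon^{(q)}_{i_q}$, for which $M_n/|n|^{1/2}$ converges to a \emph{product} of $d$ independent standard normals (see the remark following the proof of Theorem~\ref{thm:wip} and \citet[Example 1]{wang13new}). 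When you condition coordinate by coordinate, the conditional variances in the inner direction are random and do not stabilize, so the iterated one-dimensional martingale CLT does not close. The CLT~\eqref{eq:WW13} genuinely requires the Bernoulli structure of Example~\ref{example:bernoulli} (filtration generated by i.i.d.\ $\{\epsilon_k\}_{k\in\Zd}$), and the paper imports it as a nontrivial external result from \citet{wang13new}; you invoke the Bernoulli setting only for regularity bookkeeping, which misses where it is actually essential.

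The second gap is in your Step 2(a). After telescoping in the directions $q\in S^c$, you are left with $\prod_{q\in S^c}(n_q+1)$ identically distributed copies of $\max_{m_q\le n_q,\,q\in S}|\wt M_{m_1,\dots,m_s}|$, and you assert that Cairoli's inequality~\eqref{eq:cairoli} makes the whole term $o(|n|^{1/2})$. Chebyshev plus Cairoli gives only
\[
\prod_{q\in S^c}(n_q+1)\cdot\frac{C\,\prod_{q\in S}n_q}{\epsilon^2\,|n|}=O(1),
\]
which is not enough. To get the bound to vanish one needs the \emph{uniform integrability} of $\max_{i\in[n]}M_i^2/|n|$, i.e.\ Lemma~\ref{lem:UI} and~\eqref{eq:UI}: the truncation level in $\esp_a(\cdot)$ grows like $\prod_{q\in S^c}n_q$, and it is precisely the fact that the truncated second moment tends to zero that beats the union bound. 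You use Proposition~\ref{prop:tightness} only for tightness of $M$, but never invoke Lemma~\ref{lem:UI} where it is indispensable. A smaller omission of the same flavor: controlling $\max_{m\in[n]}|S_m(f)-S_m(f^{(k)})|$ requires a maximal inequality of the form $\|\max_{m\in[n]}S_m(g)\|_2\le 2^d|n|^{1/2}\sum_i\|P_{\vv0}U_ig\|_2$ (Lemma~\ref{lem:wu}), which you implicitly assume but do not state or prove.
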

\begin{proof}[Proof of Theorem~\ref{thm:wip}]
The idea of proof is by orthomartingale approximation. 
Now introduce, for each $n\in\Nd$,
\equh\label{eq:Mn}
M_n = \sum_{i\in[n]}D_i \qmwith D_i = U_iD_{\vv0}, i\in\Nd.
\eque
One easily sees that $\{M_n\}_{n\in\Nd}$ is a $d$-parameter orthomartingale with respect to the filtration $\indzd\filF i$. 

It has been established that for an orthomartingale $\{M_n\}_{n\in\Nd}$ with stationary orthomartingale differences with respect to the filtration generated by i.i.d.~random variables,
\equh\label{eq:WW13}
\ccbb{\frac{M_{\floor{n\cdot t}}(f)}{|n|^{1/2}}}_{t\in[0,1]^d}\weakto \{\sigma\mathbb B_t\}_{t\in[0,1]^d}
\eque
in $D([0,1]^d)$. For the convergence of finite-dimensional distributions, see \citet{wang13new}; for the tightness under finite second moment, see Proposition~\ref{prop:tightness}. 
Thus, it suffices to show, 
\equh\label{eq:Sn-Mn}
\limn\proba\pp{\frac1{|n|^{1/2}}\max_{m\in[n]}|S_m(f) - M_m|>\epsilon} = 0 \mfa \epsilon>0.
\eque
To do so, observe that $f\in L^2(\filF)$ and the fact that $\indzd\epsilon i$ are i.i.d.~imply
\equh\label{eq:f}
f = \sum_{i\in\Zd}P_if
\eque
where the summation converges in $L^2$, and introduce 
\[
f\topp k = \sum_{i\in[-k,k]}P_if 
\]
and
\[
M_n\topp k = \sum_{i\in[n]}U_iD_{\vv0}\topp k \qmwith D_{\vv0}\topp k = \sum_{i\in[-k,k]}P_{\vv0}U_if.
\]
Then,
\begin{multline}\label{eq:3terms}
\max_{m\in[n]}|S_m(f) - M_m| \\
\leq \max_{m\in[n]}|S_m(f) - S_m(f\topp k)|  + \max_{m\in[n]}|S_m(f\topp k) - M_m\topp k| + \max_{m\in[n]}|M_m\topp k-M_m|.
\end{multline}
We control the three maxima separately. 

(i) To estimate the first term on the right-hand side of~\eqref{eq:3terms}, we need the following maximal inequality.  
\begin{Lem}\label{lem:wu}
Under the assumption of Theorem~\ref{thm:wip}, for all $n\in\Nd$,
\equh\label{eq:maximal}
\nn{\max_{m\in[n]}S_m(f)}_2 \leq 2^d|n|^{1/2}\sum_{i\in\Zd}\nn{f_i}_2,
\eque
with  $f_i = P_{\vv0}U_if \in L^2_{\vv0}, i\in\Zd$.
\end{Lem}
The proof is postponed to the end of section. Now,~\eqref{eq:maximal} yields
\begin{multline}\label{eq:maximal1}
\proba\pp{\frac1{|n|^{1/2}}\max_{m\in[n]}|S_m(f) - S_m(f\topp k)|>\epsilon} \\
\leq \pp{\frac{2^d}{\epsilon}\sumzd i\nn{(f-f\topp k)_i}_2}^2.
\end{multline}
Since $U_iP_j = P_{j+i}U_i, i,j\in\Zd$, observe that
\[
(f-f\topp k)_i = P_{\vv 0}U_i\pp{\sum_{j\notin[-k,k]}P_jf} = P_{\vv0}\sum_{j\notin[-k,k]}P_{j+i}U_if = f_i \indd{i\notin[-k,k]}.\]
Thus, by taking $\min\{k_1,\dots,k_d\}$ large enough, the upper bound in~\eqref{eq:maximal1} can be arbitrarily small.

(ii) To estimate the last term in the right-hand side of~\eqref{eq:3terms}, observe that $\{M_n-M_n\topp k\}_{n\in\Nd}$ is still a stationary orthomartingale. Again by Cairoli's maximal inequality, we have
\equh\label{eq:maximal3}
\proba\pp{\frac1{|n|^{1/2}}\max_{m\in[n]}\abs{M_m - M_m\topp k}>\epsilon} \leq \pp{\frac{2^d}{\epsilon}\nn{D_{\vv 0} - D_{\vv 0}\topp k}_2}^2. 
\eque
Thus, by taking $\min\{k_1,\dots,k_d\}$ large enough, the upper bound in~\eqref{eq:maximal3} can be arbitrarily small.

(iii) At last, write
\[
S_m(f\topp k) - M_m\topp k = \sum_{i\in[m]}U_i(f\topp k - D_{\vv0}\topp k).
\]
It remains to show that
\equh\label{eq:maximal2}
\lim_{k\to\infty}\limsupn\proba\pp{\frac1{|n|^{1/2}}\max_{m\in[n]}\abs{\sum_{i\in[m]}U_i(f\topp k - D_{\vv0}\topp k)}>\epsilon} = 0.
\eque
By Proposition~\ref{prop:coboundary}, $f\topp k - D_{\vv0}\topp k$ has an orthomartingale coboundary representation~\eqref{eq:coboundary}, and in particular,~\eqref{eq:hd} becomes
\eqnh
h_{\{1,\dots,d\}} & = & P_{\vv 0}\sum_{j\in\Zd}U_j(f\topp k - D_{\vv0}\topp k) \\
& = & P_{\vv0}\sumzd jU_j\sum_{\ell\in[-k,k]}P_\ell f - P_{\vv0}\sumzd jU_j\sum_{\ell\in[-k,k]}P_{\vv0}U_\ell f\\
& = & P_{\vv0}\sum_{\ell\in[-k,k]}U_\ell f - P_{\vv0}\sum_{\ell\in[-k,k]}U_\ell f  =  0.
\eqne
Thus,
\equh\label{eq:coboundaryapplication}
f\topp k - D_{\vv 0}\topp k = \sum_{S\subsetneq\{1,\dots,d\}}\prod_{q\in S^c}(I - U_{e_q})h_S.
\eque
To prove~\eqref{eq:maximal2}, it suffices to show for each $S\subsetneq\{1,\dots,d\}$, 
\equh\label{eq:maximal2'}
\limn\proba\pp{\max_{m\in[n]}\frac{\sum_{i\in[m]}U_i(\prod_{q\in S^c}(I - U_{e_q})h_S)}{|n|^{1/2}}>\epsilon} = 0.
\eque
To better illustrate, we first prove the case $d=2$. Suppose $S = \{1\}$. Notice that $U_{1,0}^k = U_{k,0}, k\in\Z$ by definition, and similarly for $U_{0,1}$. Then, for $n\in \N^2$,
\begin{multline}\label{eq:S=1}
\max_{m\in[n]}\sum_{i\in[m]}U_i\prod_{q\in S^c}(I - U_{e_q})h_S \\
= \max_{m\in[n]}\sum_{i\in [m]}U^{i_1}_{1,0}(U^{i_2}_{0,1}(I - U_{0,1})h_1)
= \max_{m\in[n]}\summ{i_1}1{m_1}U^{i_1}_{1,0}(U_{0,1} - U_{0,m_2+1})h_1\\
\leq 2\max_{m_2=1,\dots,n_2+1}\abs{U_{0,m_2}\max_{m_1 = 1,\dots,n_1}\summ {i_1}1{m_1}U_{i_1,0}h_1}.
\end{multline}
Write 
\[
\wt M_{m_1} = \sum_{i_1=1}^{m_1}U_{1,0}^{i_1}h_1.
\]
Observe that by Proposition~\ref{prop:coboundary}, $\{U_{1,0}^{i_1}h_1\}_{i_1\in\N}$ is a sequence of stationary martingale differences with respect to the filtration $\{\filF_n\topp 1\}_{n\in\N}$. 
So, the probability in~\eqref{eq:maximal2'} is bounded by
\begin{multline}\label{eq:s<d}
(n_2+1)\proba\pp{\frac{\sabs{\max_{m_1\leq n_1}\wt M_{m_1}}}{(n_1n_2)^{1/2}}>\epsilon/2}
\\
\leq (n_2+1) \esp_{\frac{\epsilon^2}4}\pp{\frac{\max_{m_1\leq n_1}\wt M_{m_1}^2}{n_1n_2}} 
\leq \frac{n_2+1}{n_2\epsilon^2}\esp_{n_2\frac{\epsilon^2}4}\pp{\frac{\max_{m_1\leq n_1}\wt M_{m_1}}{n_1^{1/2}}}^2.
\end{multline}
By uniform integrability~\eqref{eq:UI}, the last term above tends to zero as $\min(n_1,n_2)\to\infty$. 

The same argument applies to the case $S = \{2\}$.
For the case $S = \emptyset$, the probability in~\eqref{eq:maximal2'} is bounded by
\begin{multline}\label{eq:s=d}
\proba\pp{\frac{\max_{m_1\leq n_1,m_2\leq n_2}U_{m_1,m_2}h_S}{(n_1n_2)^{1/2}}>\epsilon/4} \\
\leq (n_1+1)(n_2+1)\proba\pp{\frac{|h_S|}{(n_1n_2)^{1/2}}>\epsilon/4} \leq \frac{(n_1+1)(n_2+1)}{n_1n_2{\epsilon^2}/{16}}\esp_{n_1n_2{\epsilon^2}/{16}}h_S^2, 
\end{multline}
which tends to zero as $n\to\infty$. We have thus proved~\eqref{eq:maximal2'} for $d=2$.

At last we sketch the proof for general $d\geq 3$. Without loss of generality, we suppose $S^c = \{s+1,\dots,d\}$ with $s=0,\dots,d-1$. In the case $s=0$,~\eqref{eq:s=d} can be easily generalized and we omit the details. In the case $s\geq1$, observe that
\begin{multline*}
\sum_{i\in[m]}U_i\prod_{q\in S^c}(I - U_{e_q})h_S \\
= \sum_{i_{1}=1}^{m_{1}}\cdots\sum_{i_s=1}^{m_s}\prodd q{1}sU_{e_q}^{i_q}\sum_{i_{s+1}=1}^{m_{s+1}}\cdots\sum_{i_d=1}^{m_d}\prodd q{s+1}dU_{e_q}^{i_q}\prodd r{s+1}d(I - U_{e_r})h_S\\
= \sum_{i_{1}=1}^{m_{1}}\cdots\sum_{i_s=1}^{m_s}\prodd q{1}sU_{e_q}^{i_q}\prodd r{s+1}d(U_{e_r}-U_{e_r}^{m_r+1})h_S\\
= \prodd r{s+1}d(U_{e_r}-U_{e_r}^{m_r+1})\pp{\sum_{i_{1}=1}^{m_{1}}\cdots\sum_{i_s=1}^{m_s}\prodd q{1}sU_{e_q}^{i_q}h_S}.
\end{multline*}
Thus,~\eqref{eq:s<d} becomes, for $n\in\Nd$,
\equh\label{eq:s<d'}
\prodd q{s+1}d(n_q+1)\proba\pp{\frac{\max_{m_q\leq n_q,q=1,\dots,s}\wt M_{m_{1},\dots,m_s}}{|n|^{1/2}}>\epsilon/2^{d-s}},
\eque
with
\[
\wt M_{m_{1},\dots,m_s} = \sum_{i_{1}=1}^{m_{1}}\cdots\sum_{i_s=1}^{m_s}\prodd q{1}sU_{e_q}^{i_q}h_S.
\]
By Proposition~\ref{prop:coboundary} again, this time $\{\prodd q{1}sU_{e_q}^{i_q}h_S\}_{i_{1},\dots,i_s\in \N^{s}}$ form a collection of $s$-dimension stationary orthomartingale differences, with respect to the commuting filtration $\{\filF_{n_{1},\dots,n_s,\infty,\dots\infty}\}_{n_{1},\dots,n_s\in \N^{s}}$. Therefore~\eqref{eq:s<d'} can be bounded as before by
\[
\prodd q{s+1}d\frac{n_q+1}{n_q}\esp_{(\prodd q{s+1}d n_q)\epsilon^2/2^{2(d-s)}}\pp{\frac{\max_{m_q\leq n_q,q=1,\dots,s}\wt M_{m_{1},\dots,m_s}}{(\prodd q{1}s n_q)^{1/2}}}^2,
\]
which again tends to zero as $n\to\infty$ by the uniform integrability~\eqref{eq:UI}.
\end{proof}
\begin{Rem}
The approximation of $S_n$ by $M_n$~\eqref{eq:Sn-Mn} actually holds for more general commuting filtrations generated by completely commuting transformations. Then~\eqref{eq:f} may not be guaranteed and  extra assumption on the regularity of the random field will be needed: $X_0$ is $\filF_{\infty,\dots,\infty}$-measurable and $\esp(X_0\mid\filF_i) \to 0$ in $L^2$ whenever $\min_{q=1,\dots,d}i_q\to-\infty$.

However, a crucial ingredient of the proof is the invariance principle~\eqref{eq:WW13} for $M_n$ established by \citet{wang13new}. For this result to hold, our assumption on the underlying random field of i.i.d.~random variables indexed by $\Zd$~ is needed. Without this assumption, in general a stationary orthomartingale difference random field may converge to a limit distribution that is not Gaussian \citep[Example 1]{wang13new}.

 \comment{Consider for example the model in Example~\ref{example:product} with $\esp\epsilon_0\topp q = 0$, $\esp(\epsilon_0\topp q)^2 = 1$. In this case, $D_i = \prodd q1d \epsilon_{i_q}\topp q, i\in\Nd$ form a collection of stationary orthomartingale differences, and for the corresponding orthomartingale $\{M_n\}_{n\in\Nd}$, $|n|^{-d/2}M_n$ converges as $n\to\infty$ in distribution to the product of $d$ independent standard Gaussian random variables.}
\end{Rem}
\begin{proof}[Proof of Lemma~\ref{lem:wu}]
Recall that $f_i = P_{\vv0}U_if$ and~\eqref{eq:f}. Since
\[
S_n(f) = \sum_{j\in[n]}U_j\sum_{i\in\Zd}P_if = \sum_{i\in\Zd}\sum_{j\in[n]}U_jP_if = \sum_{i\in\Zd}\sum_{j\in[n]}U_{j-i}f_i,
\]
we have for all $m\in[n]$,
\begin{multline*}
|S_m(f)| \leq \sumzd i\abs{\sum_{j\in[m]}U_{j-i}f_i}\\
 \leq \sumzd i\max_{k\in[n]}\abs{\sum_{j\in[k]}U_{j-i}f_i}\leq \sumzd iU_{-i}\pp{\max_{k\in[n]}\abs{\sum_{j\in[k]}U_jf_i}}.
\end{multline*}
Therefore, 
\[
\nn{\max_{m\in[n]}S_m(f)}_2 \leq \sumzd i\nn{{\max_{k\in[n]}\abs{\sum_{j\in[k]}U_jf_i}}}_2.
\]
Observe that for each $i$ fixed, $\{\sum_{j\in[k]}U_jf_i\}_{k\in\Nd}$ is an orthomartingale with respect to the filtration $\indzd\filF i$. Therefore, by Cairoli's inequality~\eqref{eq:cairoli},
\[
\nn{\max_{k\in[n]}\abs{\sum_{j\in[k]}U_jf_i}}_2\leq 2^d\nn{S_n(f_i)}_2 = 2^d|n|^{1/2}\nn{f_i}_2,
\]
where in the last step we used the fact that $\{U_jf_i\}_{j\in[n]}$ is a collection of stationary orthomartingale differences. 
\end{proof}
\section{Discussions}\label{sec:discussions}
There are some recent developments on sufficient conditions for central limit theorem and invariance principle of stationary random fields, notably by  \citet{elmachkouri13central} and \citet{wang13new}. We compare our condition to theirs.

We first show that the Hannan's condition is {\it strictly} weaker than Wu's condition \citep{wu05nonlinear,elmachkouri13central}
\equh\label{eq:wu}
\sum_{i\in\Zd}\delta_i(f)<\infty
\eque
where $\delta_i(f)$ is the {\it physical dependence measure} for a stationary random field $\{f\circ T_i\}_{i\in\Zd}$, which we will recall in a moment. \citet{elmachkouri13central} showed that this condition implies central limit theorem for stationary random fields. In dimension one, it has been shown in \citet[Theorem 1]{wu05nonlinear} that~\eqref{eq:wu}
implies Hannan's condition~\eqref{eq:hannan}, and the argument can be easily adapted to high dimension and the details are omitted. We provide an example in Proposition~\ref{prop:comparison} below that satisfies Hannan's condition but not~\eqref{eq:wu}. It suffices to construct a martingale difference random field that violates~\eqref{eq:wu}.

However, we remark also that
the results of \citet{elmachkouri13central} are more general in the sense that they include central limit theorem and invariance principle for random fields indexed by non-rectangular sets. In this case they assume stronger assumption on the moment in terms of entropy of the index sets.

In the sequel, suppose $\epsilon = \{\epsilon_i\}_{i\in\Zd}$ is a sequence of i.i.d.~random variables with $\proba(\epsilon_{\vv0} = \pm1) = 1/2$. Then, for a function $f:\{\pm1\}^\Zd\to\R$, the physical dependence measure is defined by
\equh\label{eq:deltai}
\delta_i = \nn{f(\epsilon) - f(\epsilon^{*i})}_2
\qmwith
\epsilon^{*i}_k = \left\{
\begin{array}{l@{\mbox{ if }}l}
\epsilon_k  & k\neq i\\
\epsilon_i^* & k = i,
\end{array}
\right.
\eque
where $\epsilon^{*i}$ is a copy of $\epsilon_k$, independent of $\epsilon$. 
\begin{Prop}\label{prop:comparison}
Under the above assumption, there exists a martingale difference that does not satisfy~\eqref{eq:wu}.
\end{Prop}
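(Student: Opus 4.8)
The plan is to exhibit an explicit stationary orthomartingale difference random field $X_i=f\circ T_i$, $i\in\Zd$, for which Wu's condition~\eqref{eq:wu} fails. Every ``martingale difference'' field automatically satisfies Hannan's condition~\eqref{eq:hannan}: if $f=P_{\vv0}f$ then, by Lemma~\ref{lem:projection}(iv) and the identity $U_iP_j=P_{i+j}U_i$, one has $P_{\vv0}U_if=U_iP_{-i}P_{\vv0}f=0$ for $i\neq\vv0$, so $\sum_{i\in\Zd}\nn{P_{\vv0}X_i}_2=\nn{f}_2<\infty$. Hence it suffices to produce such an $f$ with $\sum_{i\in\Zd}\delta_i=\infty$. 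I would take
\[
f=\epsilon_{\vv0}\cdot g,\qquad g=\sum_{k\geq1}\frac1k\,\epsilon_{-ke_1},
\]
the series defining $g$ being a Rademacher series with square-summable coefficients, hence convergent in $L^2$ (and almost surely); thus $g\in L^2$ with $\nn{g}_2^2=\sum_{k\geq1}k^{-2}<\infty$, and $f\in L^2(\filF)$ with $\esp f=0$ and $\esp f^2=\esp g^2<\infty$.

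The first step would be to check that $f=P_{\vv0}f$, which by the discussion in Section~\ref{sec:prelim} makes $\{f\circ T_i\}_{i\in\Nd}$ a stationary orthomartingale difference field for the filtration $\indzd\filF i$ of Example~\ref{example:bernoulli}. For this it is enough to note that $f$ is $\filF_{\vv0}$-measurable (every index it involves, namely $\vv0$ and the $-ke_1$, is $\leq\vv0$) and that $\esp\topp q_{-1}f=0$ for each $q=1,\dots,d$: for $q=1$ the factor $g$ is $\filF\topp1_{-1}$-measurable while $\epsilon_{\vv0}$ is independent of $\filF\topp1_{-1}$, so $\esp\topp1_{-1}(\epsilon_{\vv0}g)=g\,\esp\epsilon_{\vv0}=0$; for $q\geq2$ the whole variable $f$ is independent of $\filF\topp q_{-1}$, so $\esp\topp q_{-1}f=\esp f=0$.

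The second step would be to compute the physical dependence measures $\delta_i=\nn{f(\epsilon)-f(\epsilon^{*i})}_2$. For $i=\vv0$, $f(\epsilon)-f(\epsilon^{*\vv0})=(\epsilon_{\vv0}-\epsilon^{*}_{\vv0})g$, so by independence $\delta_{\vv0}=\sqrt2\,\nn{g}_2<\infty$; for $i=-ke_1$ with $k\geq1$, $f(\epsilon)-f(\epsilon^{*i})=\epsilon_{\vv0}\cdot\frac1k(\epsilon_{-ke_1}-\epsilon^{*}_{-ke_1})$, so $\delta_{-ke_1}=\sqrt2/k$; for all other $i$, $f$ does not involve $\epsilon_i$ and $\delta_i=0$. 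Hence
\[
\sum_{i\in\Zd}\delta_i\;\geq\;\sum_{k\geq1}\delta_{-ke_1}\;=\;\sqrt2\sum_{k\geq1}\frac1k\;=\;\infty,
\]
so $f$ satisfies Hannan's condition but not Wu's condition~\eqref{eq:wu}, which proves the proposition.

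Since the example is completely explicit, I do not expect a substantive obstacle; the points that need a little care are the conditional-expectation computation establishing $f=P_{\vv0}f$ — in particular treating the direction $q=1$ separately from $q\geq2$ — and keeping precise track of which coordinates $f$ depends on when evaluating the $\delta_i$. (The sequence $(1/k)_{k\geq1}$ may of course be replaced by any element of $\ell^2\setminus\ell^1$.)
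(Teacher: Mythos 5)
Your proof is correct, but it takes a genuinely different and more elementary route than the paper's. You take the multiplicative example $f=\epsilon_{\vv0}\cdot\sum_{k\geq1}k^{-1}\epsilon_{-ke_1}$, for which the physical dependence measures are computed in one line ($\delta_{-ke_1}=\sqrt2/k$, so $\sum_i\delta_i=\infty$ while $\sum_i\delta_i^2<\infty$); your verification that $f=P_{\vv0}f$ (hence that $\{f\circ T_i\}_{i\in\Zd}$ is a stationary orthomartingale difference field, which trivially satisfies Hannan's condition since $P_{\vv0}U_if=U_iP_{-i}P_{\vv0}f=0$ for $i\neq\vv0$) is also sound, including the separate treatment of the directions $q=1$ and $q\geq2$. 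The paper instead builds $f=\sum_nc_nZ_n$ from indicator blocks $Z_n$ with pairwise disjoint supports, arranged so that resampling any single past coordinate perturbs $f$ by an amount whose squared norm dominates a tail sum $\sum_{j\geq k}c_j^2\nn{Z_j}_2^2$; with $c_n^2\nn{Z_n}_2^2=n^{-2}$ this yields the strictly stronger divergence $\sum_i\delta_i^2=\infty$, which your example cannot produce. For the proposition as stated --- failure of \eqref{eq:wu}, i.e.\ of $\sum_i\delta_i<\infty$ --- your weaker divergence is all that is required, and your construction has the added advantage of living directly in $\Zd$ rather than being embedded from the one-dimensional case via an ordering of $\{i\in\Zd:i\leq-\vv1\}$.
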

\begin{proof}
We first address the case $d=1$. Set
\eqnh
Z_1(\epsilon) & = & \indd{\epsilon_{-2} = -1}\indd{\epsilon_{-1}=-1}\epsilon_0\\
Z_2(\epsilon) & = & \indd{\epsilon_{-4} = -1}\indd{\epsilon_{-3}=-1}\indd{\epsilon_{-2} = 1}\indd{\epsilon_{-1}=1}\epsilon_0\\
& \cdots & \\
Z_n(\epsilon) & = & \indd{\epsilon_{-2n} = -1}\indd{\epsilon_{-2n+1} = -1}\indd{\epsilon_{-2n+2} = 1}\cdots\indd{\epsilon_{-1}=1}\epsilon_0, n\geq 3.
\eqne
Define
\[
f = f(\epsilon) = \sif n1 c_nZ_n(\epsilon)
\]
for certain sequence of real values $\indn c$ such that 
\equh\label{eq:ck}
\sif n1 c_n^2\nn{Z_n(\epsilon)}_2^2 <\infty.
\eque 
Under this condition, clearly $f$ is well defined and a martingale difference in the sense that $f\in\filF_0$ and $\esp(f\mid\filF_{-1}) = 0$. 

Now we compute $\delta_i$ defined in~\eqref{eq:deltai}. Observe that for $i>0$, $\delta_i = 0$. From now on suppose $i<0$. Suppose $i = -(2k-1)$ or $-2k$ for some $k\in\N$, then we have
\[
f(\epsilon) - f(\epsilon^{*i}) = \sif j{k}c_j(Z_j(\epsilon) - Z_j(\epsilon^{*i})).
\]
Observe that by construction, for all $j\neq j'$, $Z_{j}(\epsilon)Z_{j'}(\epsilon^{*i}) \equiv0$, and
\[
\proba(Z_j(\epsilon)\neq Z_j(\epsilon^{*i})\mid Z_j(\epsilon)\neq 0) = 1/2, \mfa j\geq k.
\]
Thus,
\[
\bb{\sum_{j=k}^\infty c_j(Z_j(\epsilon) - Z_j(\epsilon^{*i}))}^2 = \sif j{k}c_j^2 (Z_j(\epsilon) - Z_j(\epsilon^{*i}))^2,
\]
and for each $j\geq k$,
\eqnh
\esp(Z_j(\epsilon) - Z_j(\epsilon^{*i}))^2 & \geq & \proba(Z_j(\epsilon)\neq 0)\esp\bb{(Z_j(\epsilon)-Z_j(\epsilon^{*i}))^2\mid Z_j(\epsilon)\neq 0}\\
& = & \proba(Z_j(\epsilon)\neq 0)\proba(Z_j(\epsilon)\neq Z_j(\epsilon^{*i})\mid Z_j(\epsilon)\neq 0)\\
& = & \frac12\nn{Z_j(\epsilon)}_2^2.
\eqne
Thus,
\[
\delta_i^2  = \esp \bb{\sif jkc_j(Z_j(\epsilon) - Z_j(\epsilon^{*i}))^2}^2 \geq \frac12\sif jk c_j^2\nn{Z_j(\epsilon)}_2^2,
\]
and
\[
\sum_{i\leq -1}\delta_i^2(f) \geq \sif k1\sif jk c_j^2\nn{Z_j(\epsilon)}_2^2 = \sif j1 jc_j^2\nn{Z_j(\epsilon)}_2^2.
\]
Now, choose $\indn c$ such that $c_n^2\nn{Z_n(\epsilon)}_2^2 = n^{-2}$, so that $f$ is well-defined since~\eqref{eq:ck} is satisfied. However, $\sum_i\delta_i^2(f) = \infty$ whence $\sum_i\delta_i(f)= \infty$, as desired.

It remains to prove the case $d\geq 2$. This can be done by first assigning an ordering of the space $\{i\in\Zd: i\leq-\vv1\}$ and then embedding the one-dimensional construction. The details are omitted.
\end{proof}

Next, our results also improve \citet{wang13new}. They proved a central limit theorem for stationary random field under the condition
\equh\label{eq:WMstrong}
\sum_{k\in\Nd} \frac{\nn{\esp(X_k\mid\filF_{\vv0})}_2}{|k|^{1/2}}<\infty,
\eque
and established an invariance principle under a slightly stronger assumption, replacing $\nn\cdot_2$ by $\nn\cdot_p$ for some $p>2$ in~\eqref{eq:WMstrong}. The Hannan's condition~\eqref{eq:hannan} we assumed here is weaker than~\eqref{eq:WMstrong}. This is known in dimension one, see \citet[Corollary 2]{peligrad06central}. We prove the result for high dimension in Lemma~\ref{lem:WMstrong}.
\begin{Lem}\label{lem:WMstrong}
Condition~\eqref{eq:WMstrong} implies Hannan's condition~\eqref{eq:hannan}.
\end{Lem}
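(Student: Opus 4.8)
The plan is to reduce the $d$-dimensional statement to the known one-dimensional fact that condition~\eqref{eq:WMstrong} implies Hannan's condition, applied coordinate by coordinate. The key observation is that the projection operator factors as $P_{\vv 0} = \prodd q1d P\topp q_0$, so one can estimate $\nn{P_{\vv 0}X_i}_2$ by inserting the conditional expectations one coordinate at a time and invoking, in each coordinate, the bound that makes the one-dimensional result work.

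Concretely, first I would recall the one-dimensional estimate behind \citet[Corollary 2]{peligrad06central}: for a one-parameter filtration, $\sum_{\ell\geq 0}\nn{P_0(Y_\ell)}_2 \lesssim \sum_{\ell\geq 1}\ell^{-1/2}\nn{\esp(Y_\ell\mid\filF_0)}_2$ (up to symmetrization over $\ell<0$), which follows from writing $P_0(Y_\ell) = \esp_0 Y_\ell - \esp_{-1}Y_\ell$, grouping dyadic blocks, and applying the martingale/maximal bound inside each block. Next, for the random field, I would fix all coordinates but one: viewing $i\mapsto X_i$ through the one-parameter filtration $\filF\topp q$, the sum $\sum_{i_q\in\Z}\nn{P\topp q_{i_q}X_i}_2$ (with the other coordinates frozen) is controlled by $\sum_{i_q\geq 1} i_q^{-1/2}\nn{\esp\topp q_0 X_i}_2$ by the one-dimensional lemma. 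Then I would iterate this over $q=1,\dots,d$, using the commuting property~\eqref{eq:commuting} of the operators $P\topp q_\ell$ (Lemma~\ref{lem:projection}(i)) and the contraction property of conditional expectation so that inserting the bound in one coordinate does not destroy the estimates already obtained in the others. After $d$ iterations one arrives at
\[
\sum_{i\in\Zd}\nn{P_{\vv 0}X_i}_2 \leq C_d \sum_{k\in\Nd}\frac{\nn{\esp(X_k\mid\filF_{\vv 0})}_2}{|k|^{1/2}},
\]
where $\esp(X_k\mid\filF_{\vv0})$ arises from the composition of the $d$ one-parameter conditional expectations, equal to $\esp(X_k\mid\filF_{\vv 0})$ by~\eqref{eq:commuting}; the right-hand side is finite by hypothesis~\eqref{eq:WMstrong}.

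A point that needs care is the bookkeeping of signs and of the negative indices: $P\topp q_\ell$ involves $\esp\topp q_\ell - \esp\topp q_{\ell-1}$, and for $i_q<0$ one must express $\esp\topp q_{i_q}X_i$ in terms of shifts of $\esp\topp q_0 X_k$ with $k$ having a large $q$-th coordinate, exactly as in the one-dimensional argument; by stationarity this is a relabelling, contributing only the constant $C_d$ (a product of the $d$ one-dimensional constants). The main obstacle, and the only genuinely non-routine part, is verifying that the iteration is legitimate — that after bounding in coordinate $q$ one still has, in the remaining coordinates, quantities of the form $\nn{\esp^{(q')}_0(\cdots)}_2$ to which the next step of the induction applies. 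This is where the complete commutativity, i.e.~\eqref{eq:commuting} together with Lemma~\ref{lem:projection}(i), is essential: it lets the conditional expectations and projections in distinct coordinates be reordered freely, so the $d$-fold estimate really does factor. Since \citet{wu05nonlinear} already remark that their dimension-one argument adapts to $\Zd$, and the present reduction is of the same flavour, I expect the write-up to be short, with the bulk of the work being the careful statement of the one-dimensional building block and the induction on $d$.
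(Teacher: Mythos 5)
Your plan is workable and would prove the lemma, but it is organized differently from the paper's argument. You reduce to the one-dimensional comparison behind Peligrad--Utev and iterate it coordinate by coordinate, which obliges you to check at each stage that the intermediate objects (projections and conditional expectations applied in the already-treated coordinates) are again stationary, regular one-parameter sequences to which the building block applies; this is exactly the bookkeeping you flag, and it does go through thanks to the commutation relations of Lemma~\ref{lem:projection} and the i.i.d.\ setup. The paper instead runs the whole estimate in one shot on $\Zd$: setting $a_n=\nn{P_{\vv0}X_n}_2$, it uses the $d$-dimensional orthogonality identity $\nn{\esp(X_n\mid\filF_{\vv0})}_2^2=\sum_{k\geq n}a_k^2$, restricts the inner sum to the block $B_n=\{k:n\leq k\leq 2n-\vv1\}$ of cardinality $|n|$, applies Cauchy--Schwarz to get $|n|^{-1/2}\bigl(\sum_{k\in B_n}a_k^2\bigr)^{1/2}\geq|n|^{-1}\sum_{k\in B_n}a_k$, and swaps the order of summation to conclude $\sum_k a_k\leq 2^d\sum_n|n|^{-1/2}\nn{\esp(X_n\mid\filF_{\vv0})}_2$. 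This is the same dyadic-blocking mechanism as your one-dimensional step, but carried out directly on the $d$-dimensional index set, so no induction and no verification of intermediate regularity is needed; your route buys modularity (the known $d=1$ result is used as a black box) at the cost of those extra checks. One small imprecision in your sketch: the one-dimensional estimate rests on orthogonality of the projection operators plus Cauchy--Schwarz over dyadic blocks, not on any martingale maximal inequality.
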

\begin{proof}
For $n\in\Nd$, set $a_n = \nn{P_{\vv0}X_n}_2$. Then, it is equivalent to show
\equh\label{eq:infty}
\sum_{n\in\Nd}a_n = \infty \mbox{ implies } \sum_{n\in\Nd}\frac1{|n|^{1/2}}\pp{\sum_{k\geq n}a_k^2}^{1/2}=\infty.
\eque
To see this, first observe that by orthogonality, for each $n\in\Nd$,
\[
\nn{\esp(X_n\mid\filF_0)}_2^2 = \sum_{k\geq\vv0}\nn{P_{-k}X_n}_2^2 = \sum_{k\geq n}\nn{P_{\vv0}X_k}_2^2 = \sum_{k\geq n}a_k^2.
\]
To prove~\eqref{eq:infty}, introduce $B_n = \{k\in\Nd:n\leq k\leq 2n - \vv 1\}$, and observe
\begin{multline*}
\sum_{n\in\Nd}\frac1{|n|^{1/2}}\pp{\sum_{k\geq n}a_k^2}^{1/2}\geq \sum_{n\in\Nd}\frac1{|n|^{1/2}}\pp{\sum_{k\in B_n}a_k^2}^{1/2} \\
\geq \sum_{n\in\Nd}\frac1{|n|}\sum_{k\in B_n}a_k = \sum_{k\in\Nd}a_k\sum_{n\in\Nd}\frac1{|n|}\indd{k\in B_n}\geq \frac1{2^d}\sum_{k\in\Nd}a_k.
\end{multline*}
\end{proof}
The fact that~\eqref{eq:WMstrong} is actually {\it strictly stronger} than Hannan's condition follows from \citet{durieu08comparison} and \citet{durieu09independence}, in the case $d=1$. Indeed, they constructed a counterexample to show that Hannan's condition does not imply the Maxwell--Woodroofe condition \citep{maxwell00central}, and the latter is known to be strictly weaker than~\eqref{eq:WMstrong}.  Thus, if Hannan's condition implies~\eqref{eq:WMstrong}, it then implies the Maxwell--Woodroofe condition, hence a contradiction. The counterexample therein can be generalized to $\Zd$. 
\bigskip

{\bf Acknowledgement} The authors thank Mohamed El Machkouri and Davide Giraudo for many inspiring discussions. The authors are grateful to an anonymous referee for pointing out an erroneous statement about commuting transformations in the previous version, and for raising our attention to the completely commuting property discussed in \citet{gordin09martingale}. The second author thanks Laboratoire Math\'ematiques Rapha\"el Salem at Universit\'e de Rouen for the invitation during June, 2013, during which the main result of this work was obtained. The second author was partially supported by Faculty Research Grant from the University Research Council at University of Cincinnati in 2013. 

\bibliographystyle{apalike}
\bibliography{references}

\end{document}